\numberwithin{equation}{section}
\newtheorem{theorem}{Theorem}[section]
\newtheorem{proposition}[theorem]{Proposition}
\newtheorem{lemma}[theorem]{Lemma}
\theoremstyle{remark}
\newtheorem{remark}[theorem]{Remark}
\newtheorem{example}[theorem]{\bf Example}
\newcommand{\h}{{\bf H}}
\newcommand{\R}{\mathbb{R}}
\newcommand{\cl}{\mathbb{S}^1(\frac{\sqrt2}{2})}
\newcommand{\s}{\mathbb{S}^3}
\newcommand{\stwo}{\mathbb{S}^2}
\newcommand{\sone}{\mathbb{S}^1}
\newcommand{\sn}{\mathbb{S}^n}
\newcommand{\CC}{R_{m,k}}
\newcommand{\hCC}{\widehat{R}_{m,k}}
\newcommand{\Z}{\mathbb{Z}}
\newcommand{\V}{\mathbb{V}}
\newcommand{\DD}{\mathcal{D}}
\newcommand{\E}{\mathbb{E}}
\newcommand{\dd}{\delta}
\newcommand{\F}{\mathcal{F}}
\newcommand{\TT}{\mathcal{T}}
\newcommand{\tg}{\widetilde{G}_{m,k}}
\newcommand{\g}{{G}_{m,k}}
\newcommand{\hg}{\widehat{G}_{m,k}}
\newcommand{\area}{\operatorname{area}}
\newcommand{\blue}{\textcolor{blue}}
\newcommand{\la}{{\langle}}
\newcommand{\ra}{\rangle} 
\newcommand{\RP}{\mathcal{R}_P}
\newcommand{\RQ}{\mathcal{R}_Q}
\newcommand{\hp}{\pi}
\begin{document}
\title [The Willmore problem for surfaces with symmetry  ]{\bf{The Willmore problem for surfaces  with symmetry}}  

\author
[Rob Kusner, Ying L\"{u} \& Peng Wang]{Rob Kusner, Ying L\"{u} \& Peng Wang}
\address {Department of Mathematics, University of Massachusetts, Amherst, MA 01003, USA} \email{profkusner@gmail.com, kusner@umass.edu}
\address{School of Mathematical Sciences, Xiamen University, Xiamen, 361005, P. R. China} \email{lueying@xmu.edu.cn}
\address{School of Mathematics and Statistics, Key Laboratory of Analytical Mathematics and Applications (Ministry of Education), Fujian Key Laboratory of Analytical Mathematics and Applications (FJKLAMA), Fujian Normal University, Fuzhou 350117, P. R. China} \email{netwangpeng@163.com, pengwang@fjnu.edu.cn}

\date{\today}
\begin{abstract}
The Willmore Problem seeks closed surfaces in $\s\subset\R^4$ of a given topological type minimizing the squared-mean-curvature energy $W = \int |\h_{\R^4}|^2  = \area + \int H_{\s}^2$.
The longstanding Willmore Conjecture that the Clifford torus minimizes $W$ among genus-$1$ surfaces is now a theorem of Marques and Neves \cite{MN},
but the general conjecture \cite{Kusner} that Lawson's \cite{Lawson} minimal surface $\xi_{g,1}\subset\s$ minimizes $W$ among surfaces of genus $g>1$ remains open. Here we prove this conjecture under the additional assumption that the competitor surfaces $M\subset\s$ share the ambient symmetries $\widehat{G}_{g,1}$ of $\xi_{g,1}$.  In fact, we show each Lawson surface $\xi_{m,k}$ satisfies the corresponding $W$-minimizing property under a smaller symmetry group $\widetilde{G}_{m,k}=\widehat{G}_{m,k}\cap SO(4)$. We also describe a genus 2 example where known methods do not ensure the existence of a $W$-minimizer among surfaces with its symmetry.
\end{abstract}

 \subjclass[2020]{Primary~53A10;  Secondary~49Q10; 57R18}
 \keywords{Lawson minimal surfaces; Willmore minimizers; symmetry groups; orbifolds; admissible isotopy}
 
\maketitle

\section{Introduction}

The Willmore Problem of finding closed surfaces $M$ with given topological type in the unit 3-sphere $\s\subset\R^4$
which minimize the squared-mean-curvature energy
\[W(M) = \int_M |\h_{\R^4}|^2  = \area(M)  + \int_M H_{\s}^2\]
has led to much progress in geometric analysis
(see, for example, \cite{Brendle, ElSoufi1,KusnerMSRI,Li-y,Marques0,MN,MN2,Montiel,Neves}).
Willmore settled the problem for surfaces of genus $0$, showing the $W$-minimizer must be a round sphere, and Bryant \cite{Bryant1984} showed all $W$-critical surfaces of genus $0$ arise from 
compactifying complete minimal surfaces in $\R^3$ with embedded planar ends; except for the round sphere, these are all $W$-unstable \cite{HirschJDG, HKM, Michelat-CAG}. The problem for genus $1$ is the subject of the famous Willmore Conjecture, proven in the landmark paper of Marques and Neves \cite{MN} to be M\"obius equivalent to the minimal Clifford torus $T=\cl\times\cl\subset\s$ with $W=\area(T)=2\pi^2$.

In the 1980s, motivated by work of Weiner \cite{Weiner} and Li-Yau \cite{Li-y}, one of us \cite{Kusner} observed that the embedded minimal surfaces $\xi_{m,k}\subset\s$
of genus~$mk$ constructed by Lawson \cite{Lawson} satisfy $\area(\xi_{m,k})<4\pi(\min(m,k)+1)$, and in particular $W(\xi_{g,1})=\area(\xi_{g,1})<8\pi$ for the
simplest surface $\xi_{g,1}$ of genus~$g$ (the Clifford torus $T=\xi_{1,1}$). 
The latter estimate implies the infimum $\beta_g$ of $W(M)$ among surfaces $M$ of genus $g\ge1$ satisfies $\beta_g<8\pi$, and
since we now \cite{MN} know $\beta_g\ge\beta_1=2\pi^2>6\pi$, this establishes the Douglas-type condition ($\beta_{e}+\beta_{f}>\beta_{e+f}+4\pi$ for any $e,f\in \Z^+$) that Simon \cite{Simon93} needed to prove
a $W$-minimizing surface of each genus $g\ge1$ exists and is embedded \cite{Li-y}. 
An earlier gluing construction \cite{BK,Kusner96} also established
this Douglas-type condition directly.

Based on a preliminary study (inspired by \cite{Weiner}) of the $W$-stability properties of $\xi_{g,1}$ for small genus, and on limiting properties of the $\xi_{g,1}$ family for large genus
\cite{KusnerDiss}, it was conjectured \cite{Kusner} that for closed surfaces of genus $g$ in $\s$, the Willmore energy is greater or equal to $\area(\xi_{g,1})$ and equality holds if and only if the surface is congruent via the M\"obius group to $\xi_{g,1}$ --- a genus-$g$ version of the Willmore Conjecture.  But until recently there has been little progress towards identifying the solution to the genus-$g$ Willmore Problem as the Lawson surface $\xi_{g,1}$, aside from experimental evidence (see \cite{HKS} or  \cite{KusnerMSRI}), and a heuristic observation \cite{KusnerDiss} that if the $W$-minimizer were realized by a smallest area minimal surface of that genus, then the large-genus varifold limit \cite{KLS} for the $W$-minimizers would be supported  on the union of 2 round spheres meeting orthogonally (like the $\xi_{g,1}$-limit) rather than a round sphere with multiplicity 2.


A greater understanding (see \cite{Brendle2} for a succinct survey) of the analytic properties for the Lawson minimal surfaces
$\xi_{m,k}$ has emerged recently.  
\begin{itemize}
\item
Choe and Soret showed \cite{choesoret} that the first Laplace eigenvalue of $\xi_{m,k}$ is $2$, confirming Yau's conjecture for the Lawson minimal surfaces $\xi_{m,k}$. We modified \cite{KW2} their argument to show the first Laplace eigenspace of $\xi_{m,k}$ coincides with the $4$-dimensional space spanned by the coordinate functions. It follows (using the idea of conformal area introduced in \cite{Li-y} and sharpened by Montiel-Ros \cite{Montiel}) that $\xi_{m,k}$ is the unique $W$-minimizer (up to congruence in the M\"obius group) among all conformal immersions of the underlying Riemann surface $[\xi_{m,k}]$ into $\sn$ for any $n\geq3$.
\vspace{2mm}
\item
Kapouleas and Wiygul showed \cite{KaW} the area-index of $\xi_{g,1}$ is exactly $2g+3$, using its symmetries to decompose the low eigenspaces of its area Jacobi operator $\mathcal{L}$.
One can go further to show \cite{KW2} that $\mathcal{L}$ has the critical spectral gap \cite{Weiner} between $-2$ and $0$, and thus $\xi_{g,1}$ is $W$-stable in $\s$; moreover, if we regard $\xi_{g,1}\subset\sn$ by linearly embedding $\s\subset \sn$, its $W$-stability persists --- indeed, they are in fact local $W$-minimizers in $\sn$. Kapouleas and Wiygul   \cite{KaW2} also used Schoen's version \cite{SchoenJDG83} of the Alexandrov moving plane method to study  symmetries and uniqueness of $\xi_{m,k}$.
\end{itemize}
\noindent
Although the above results make essential use of the Lawson surface symmetries, it is unclear whether the direct method can be used to prove the existence of a $W$-minimizer among surfaces with a given symmetry group and topology: indeed, this family of genus 2 surfaces (see Figure~\ref{Fig:SdC}) embedded in~$\s$ is invariant under an action of a subgroup $\widehat{K}<O(4)$ of order~$8$ (extending its Klein halfturn subgroup $K<SO(4)$ of order $4$)
but the methods of \cite{BK,Kusner96} do not establish the Douglas-type condition, so a $W$-minimizing sequence of these $\widehat{K}$-symmetric surfaces might degenerate to a round 2-sphere with ``infinitesimal handles" at the left and right.

\begin{figure}[htb] 
\includegraphics[width=.85\textwidth]{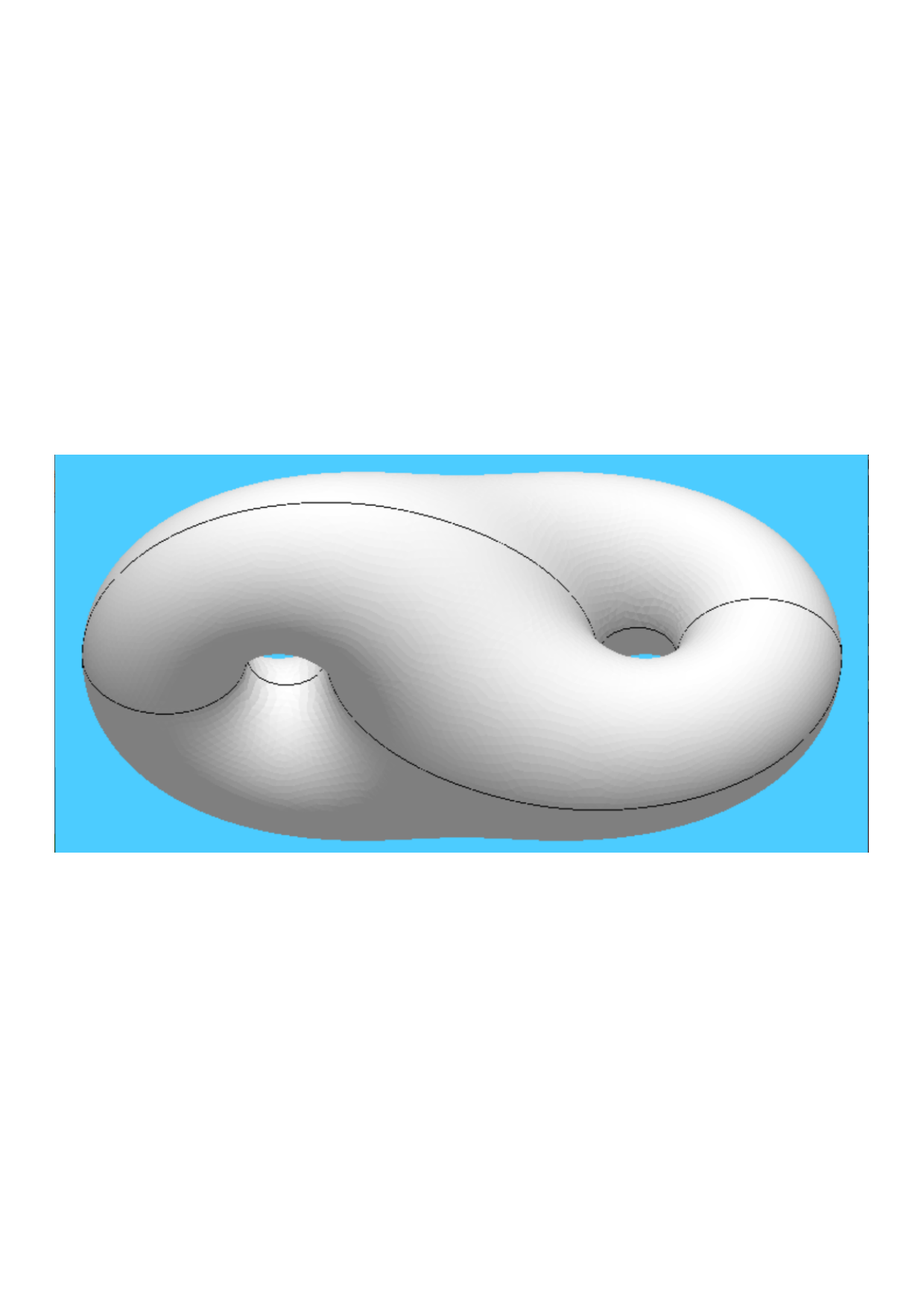}
\caption{Does a $W$-minimizing sequence of genus 2 surfaces with $\widehat{K}$-symmetry converge to a $W$-minimizer of
      lower genus? [Image courtesy of Ken Brakke]}\label{Fig:SdC}
\end{figure}

All of the above motivates the basic question we begin to address here:
\begin{center}
\emph{For finite subgroups of $SO(4)$ or $O(4)$ acting on an embedded surface in $\s$ is there a symmetric $W$-minimizing surface of the same topology?} 
\end{center}




\vspace{1mm}
\noindent 
The main purpose of this paper is to explore when both existence and uniqueness {\em can} be established for solutions to this {\em Willmore Problem for surfaces with symmetry},
and we prove that the Lawson minimal surface $\xi_{m,k}$ is the unique $W$-minimizer among embedded surfaces of the same genus
which are invariant with respect to any of the following order-$4(m+1)(k+1)$ subgroups of its full symmetry group.

\begin{theorem}\label{th}
Let $\tg<SO(4)$ be the halfturn-symmetry group of order $4(m+1)(k+1)$ for the embedded Lawson minimal surface $\xi_{m,k}\subset\s$ of genus $mk$,
and let $G$ be any group conjugate to $\widetilde{G}_{m,k}$ in the M\"obius group of conformal transformations of $\s$. If $M\subset\s$ is a closed embedded surface of genus $mk$ which is symmetric under $G$, then
\[W(M)\geq W(\xi_{m,k})=\area(\xi_{m,k})\geq W(\xi_{mk,1})=\area(\xi_{mk,1}).\]
The first equality holds if and only if $M$ is M\"obius-congruent to $\xi_{m,k}$, and the second equality holds if and only if $k=1$.  

The same holds if $\tg$ is replaced by the order-$4(m+1)(k+1)$ groups ${G}^P_{m,k}$ or
${G}^Q_{m,k}<O(4)$ generated by a single reflection symmetry together with the Schwarz-halfturn symmetry group $G_{m,k}\vartriangleleft\tg$ of index $2$.
\end{theorem}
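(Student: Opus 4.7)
The plan is to reduce the Willmore inequality to an area inequality and then exploit the $G$-symmetry to localize to Lawson's Plateau problem in a single fundamental domain. Since $W(M) = \area(M) + \int_M H_{\s}^2 \geq \area(M)$ with equality iff $M\subset\s$ is minimal, and since $\xi_{m,k}$ is minimal so $W(\xi_{m,k}) = \area(\xi_{m,k})$, it suffices to establish the area comparison $\area(M) \geq \area(\xi_{m,k})$ for every $G$-symmetric closed embedded surface $M$ of genus $mk$, with equality forcing $M$ to be M\"obius-congruent to $\xi_{m,k}$.

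For the area comparison, I would exploit the Schwarz halfturn subgroup $G_{m,k}\vartriangleleft\tg$ of index $2$, generated by halfturns about the Lawson great circles $C_0,\dots,C_{m+k+1}\subset\s$ that lie on $\xi_{m,k}$ by construction. The $G_{m,k}$-action tessellates $\s$ into $2(m+1)(k+1)$ congruent geodesic fundamental regions, and the extra halfturn in $\tg$ pairs them, leaving one region $T$ as a fundamental domain for $\tg$; Lawson's construction realizes $\xi_0:=\xi_{m,k}\cap T$ as a minimal embedded disk spanning a geodesic quadrilateral on $\partial T$. The first subtask is to show that each $C_i$ lies on $M$: the halfturn fixing $C_i$ restricts to an involution of $M$ whose fixed set lies in $M\cap C_i$, and a Riemann--Hurwitz count matched against the signature of the $\tg$-action on $\xi_{m,k}$ (using the genus $mk$ and $G$-invariance) forces this fixed set to be all of $C_i$. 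Then $M_0:=M\cap T$ is an embedded disk with $\partial M_0=\partial\xi_0$, and $\area(M) = 4(m+1)(k+1)\,\area(M_0)$.

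The second subtask is a Plateau-type rigidity within $T$: the Lawson piece $\xi_0$ is the unique area-minimizing embedded disk in $T$ spanning $\partial\xi_0$. Granting this yields $\area(M_0)\geq\area(\xi_0)$ with equality iff $M_0=\xi_0$, hence $\area(M)\geq\area(\xi_{m,k})$ with equality iff $M=\xi_{m,k}$, and the Willmore inequality follows from the first paragraph. The remaining bound $\area(\xi_{m,k})\geq\area(\xi_{mk,1})$ with equality iff $k=1$ is a known monotonicity of Lawson areas in $(m,k)$. The variants $G^P_{m,k}$ and $G^Q_{m,k}$ contain $G_{m,k}$ with index $2$ but use a single reflection rather than an additional halfturn to generate the quotient; they share the fundamental domain $T$, and the boundary-matching argument applies verbatim.

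I expect the main obstacle to be the Plateau rigidity: area-minimizing disks spanning geodesic polygons in $\s$ need not be unique in general, so one must exploit either the small geometric size of $T$ (with diameters of order $\pi/(\min(m,k)+1)$) to invoke classical Plateau uniqueness, or construct an explicit calibrating $2$-form for $\xi_0$ within $T$. A natural candidate combines the ambient Killing fields associated to $\tg$ with the first-Laplace-eigenvalue coordinate functions on $\xi_{m,k}$ from \cite{choesoret, KW2}, whose structure is closely tied to the $W$-stability established there. The topological forcing $C_i\subset M$ is a secondary but delicate step; it should follow from symmetry analyses in the spirit of \cite{KaW, KaW2}.
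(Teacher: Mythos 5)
Your reduction $W(M)\ge\area(M)$ and the final localization to the Plateau problem in a single tile match the paper's strategy, but your first subtask --- that $\tg$-symmetry and genus $mk$ force every Lawson circle $\gamma_{j,l}$ to lie \emph{on} $M$ --- is false, and this is exactly where the real content of the theorem sits. The halfturn $\underline{\gamma_{j,l}}$ restricted to $M$ need not have one-dimensional fixed locus: if $M$ meets $\gamma_{j,l}$ transversally, the involution acts as $-\mathrm{id}$ on the tangent plane there and has an isolated fixed point, and no Riemann--Hurwitz count rules this out. The dual surface $\xi^*_{m,k}$ (the rotate of $\xi_{m,k}$ through the midpoints $P^*_j,Q^*_l$) is an embedded, genus-$mk$, $\tg$-symmetric surface that meets every $\gamma_{j,l}$ orthogonally in isolated points and contains none of them; the authors' acknowledgements record that an earlier version of their own argument foundered on precisely this example. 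So the conclusion of your first subtask can only be that $M$ is \emph{congruent} to a surface containing the circles, and reaching even that requires a genuine case analysis, not a fixed-point count.

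What the paper actually does is pass to the orbifold $\s/\CC$ and prove (Theorem \ref{thm-inter}) that for any $G_{m,k}$-symmetric $M$ of genus in the admissible range, $\pi(M)$ is a $2$-sphere meeting each singular circle $\mathbb{S}_i$ in exactly two points, and either $\mathbb{L}\subset\pi(M)$ or $\pi(M)$ meets $\mathbb{L}$ transversally in exactly two points. The second case is then disposed of using the \emph{specific} extra generator beyond $G_{m,k}$: for $\tg$ the induced $\underline{\mathbb{L}^*}$-symmetry forces $\pi(M)\cap\mathbb{S}_1=\{\V_1^{*\pm}\}$ and hence $\mathbb{L}^*\subset\pi(M)$, after which one rotates $\{\gamma^*_{j,l}\}$ to $\{\gamma_{j,l}\}$; for $G^P_{m,k}$ and $G^Q_{m,k}$ the reflection yields a contradiction via a collar/transversality argument. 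Your claim that the reflection variants go through ``verbatim'' therefore also misses the point --- the two index-$2$ extensions of $G_{m,k}$ are used in genuinely different ways. By contrast, the step you flag as the main obstacle (Plateau rigidity in the tile $\TT_{0,0}$) is the easy part here: uniqueness of the least-area disk spanning $\Gamma_{0,0}$ is quoted from Lawson and Kapouleas--Wiygul, and no calibration needs to be built. Finally, the inequality $\area(\xi_{m,k})\ge\area(\xi_{mk,1})$ with equality iff $k=1$ is not ``known monotonicity''; the paper defers it to the companion work \cite{KW3}.
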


\noindent
Here and throughout the paper, we assume $m\geq k\geq 1$ to avoid trivial cases ($\xi_{m,0}$ is a great $\stwo\subset\s$).  We note that $\tg$ is the maximal rotation-symmetry group for $\xi_{m,k}$ if $m\ne k$, and $G_{m,k}$ is the maximal common subgroup of $\tg$, ${G}^P_{m,k}$ and ${G}^Q_{m,k}$, with an index-$2$ maximal abelian subgroup $\CC\vartriangleleft G_{m,k}$ (see \eqref{eq-groups} for definitions of these groups).

\vspace{4mm}
\noindent
\textbf{Plan of the paper.}
Section 2 describes $\xi_{m,k}$ and its symmetry groups via the 3-orbifold $\s/\CC\cong\s$ containing the 2-orbifold $\xi_{m,k}/\CC\cong\stwo$. Two observations simplify our description of the group actions:  the chain of index-2 normal subgroups $\CC\vartriangleleft G_{m,k}\vartriangleleft\tg$ induces a tower of normal coverings $\s/\CC\to\s/G_{m,k}\to\s/\tg$, and $\CC\vartriangleleft\tg$ so the order-$4$ quotient group $\tg/\CC$ acts by deck transformations of the degree-4 orbifold covering $\s/\CC\to\s/\tg$; this is used in Section 3 to determine the topology and geometry of the 2-orbifold $M/\CC$ in $\s/\CC$ for any $G_{m,k}$-symmetric surface $M$ of genus $g$ satisfying $1<g\leq mk+k$, and to show $g=mk$.  Section~4 finishes the proof of Theorem \ref{th}.  Section~5 concludes with a topological characterization of $G_{m,k}$-symmetric surfaces in~$\s$.

\section{On the symmetries of  the Lawson minimal surface $\xi_{m,k}$}

In this section we recall the construction of the Lawson surface $\xi_{m,k}\subset\s$, and then discuss its symmetry groups, particularly the maximal abelian subgroup $\CC$.
We introduce the surface orbifold $\xi_{m,k}/\CC$ and observe it is an embedded 2-sphere in the 3-orbifold $\s/\CC$ that is itself homeomorphic to $\s$; this orbifold perspective \cite{Conway, Thurston} is essential here.
\subsection{Construction of $\xi_{m,k}$}

Lawson \cite{Lawson} constructed the minimal surface $\xi_{m,k}\subset\s\subset\mathbb{R}^4=\mathbb{C}^2$ of genus $mk$ by
considering the two orthogonal great circles
\begin{equation}\label{eq:theta}
	\gamma=\{P= (0, e^{i\theta} )|\, \theta\in[-\pi,\pi]\},~ \gamma^\perp=\{Q= (e^{i\theta^\perp},0)|\,\theta^\perp\in[-\pi,\pi]\}
\end{equation}
 in $\s$, placing equally-spaced points
\begin{equation*}
P_j = \left(0,e^{i\frac{j\pi}{k+1}}\right),
\quad Q_l = \left(e^{i\frac{l\pi}{m+1}}, 0\right),\quad
j\in \Z_{2k+2},\ l\in \Z_{2m+2}
\end{equation*}
 on  $\gamma$  and $\gamma^\perp$ respectively, such that $\{P_j\}$ decomposes $\gamma$ into $2k+2$ arcs, and $\{Q_l\}$ decomposes $\gamma^\perp$ into $2m+2$ arcs. Set
 \begin{equation}\label{eq-Gamma0}
  \begin{split}
    \Sigma_{P_j}&=   \hbox{the geodesic sphere in $\s$ determined by $P_j$ and $\gamma^\perp$}, \\
    \Sigma_{Q_l}&=   \hbox{the geodesic sphere in $\s$ determined by by $Q_l$ and $\gamma$}, \\
   \gamma_{j,l}&=   \hbox{the geodesic circle containing $P_{j}$ and $Q_l$.} \\
  \end{split}
\end{equation}
So $\sharp\{ \Sigma_{P_j}\}=k+1$,  $\sharp\{ \Sigma_{Q_l}\}=m+1$ and  $\sharp\{\gamma_{j,l}\}=(k+1)(m+1)$.  Moreover, let $P_jQ_l$, $P_{j}P_{j+1}$ and $Q_lQ_{l+1}$ denote the shortest geodesic arcs joining the respective points. The collection of great spheres
\[\left\{\Sigma_{P_j},\Sigma_{Q_l}|\,0\leq j\leq k, 0\leq l\leq m\right\}\] decomposes $\s$ into a collection $\{\TT_{j,l}\}$ of $4(m+1)(k+1)$  tetrahedral tiles, where the tile $\TT_{j,l}$ is the convex hull of $\{P_{j},P_{j+1},Q_{l},Q_{l+1}\}$ in $\s$.

Let $\Gamma_{j,l}$ be the geodesic quadrilateral $P_{j}Q_{l}P_{j+1}Q_{l+1}$ on the boundary of   $\TT_{j,l}\subset\s$. By \cite{Lawson}, among all surfaces in $\TT_{j,l}$ with boundary $\Gamma_{j,l}$, there exists a unique minimal disk $\delta_{j,l}$ with least area (see \cite{KaW2} for more on uniqueness).
Repeated Schwarz halfturns about the collection of great circles $\{\gamma_{j,l}\}$ applied to this disk generates an embedded closed minimal surface in $\s$,
extending smoothly across the edges of $\Gamma_{j,l}$, which is exactly
$\xi_{m,k}$ when $j+l$ is even (or a rotated {\em odd companion} surface $\xi^o_{m,k}$   when $j+l$ is odd; see Example 2.2 and also \cite{Brendle2,choesoret,KaW,Lawson}).\vspace{2mm}

To describe the symmetries of $\xi_{m,k}$ we also consider (as in \cite{KaW, KaW2}) the dual tiling $\{\TT^*_{j,l}\}$ of $\mathbb{S}^3$:
the tile $\TT^*_{j,l}$ here is the convex hull of $\{P^*_{j},P^*_{j+1},Q^*_{l},Q^*_{l+1}\}$ in $\s$, with 
\begin{equation*}\label{eq-pq*}
\begin{split}
&P_j^* = \left(0,e^{i \frac{(2j+1)\pi}{2(k+1)}} \right),~~ Q_l^* = \left(e^{i \frac{(2l+1)\pi}{2(m+1)}}, 0\right)   
\end{split}
\end{equation*}
 the midpoints of the edges $P_jP_{j+1}$ and $Q_lQ_{l+1}$, respectively.
So $\{P_j^*\}$ divides $\gamma$ into $2k+2$ arcs, and $\{Q_l^*\}$ divides $\gamma^\perp$ into $2m+2$ arcs, as $\{P_j\}$ and $\{Q_l\}$ do.
We similarly define the dual great circle $\gamma_{j,l}^*$, the dual great 2-spheres $\Sigma_{P_j^*}$ and $\Sigma_{Q_l^*}$, and the dual boundary quadrilateral $\Gamma^*_{j,l}$ with its unique Plateau disk $\delta^*_{j,l}$.  The dual Lawson minimal surface $\xi_{m,k}^*$ is congruent to $\xi_{m,k}$ by an ambient rotation $\RP^q\RQ^q$ taking $\{P_j^*, Q_l^*\}$ to $\{P_j, Q_l\}$. Here $\RP^q$ and $\RQ^q$ are rotations about $\gamma$ and $\gamma^\perp$ by angles $\frac{\pi}{2(m+1)}$ and $\frac{\pi}{2(k+1)}$, respectively:
\[	\RP^q(z_1,z_2)=(z_1e^{i\frac{\pi}{2(m+1)}},z_2),\quad
\RQ^q(z_1,z_2)=(z_1,z_2 e^{i\frac{\pi}{2(k+1)}}).
\]

\subsection{Symmetries of $\xi_{m,k}$}
For any great circle $\tilde\gamma$ or great 2-sphere $\Sigma$ in $\s$, we denote by $\underline{\tilde\gamma}$ the halfturn fixing $\tilde\gamma$ and by $\underline{\Sigma}$
the reflection fixing $\Sigma$. For example,
\begin{equation*} 
\begin{split}
&    \underline{\Sigma_{P_j}}(z_1,z_2)=({z_1},\overline{z_2}e^{i\frac{2j\pi}{k+1}}),\\
&\underline{\Sigma_{Q_l}}(z_1,z_2)=(\overline{z_1}e^{i\frac{2l\pi}{m+1}},z_2),\\
&\underline{\gamma_{j,l}}=\underline{\Sigma_{Q_l}}\cdot\underline{\Sigma_{P_j}}.\end{split}
\end{equation*}

\noindent
By its construction, the Lawson minimal surface $\xi_{m,k}$ is symmetric with respect to every halfturn $\underline{\gamma_{j,l}}$, and by uniqueness of the Plateau disks in its construction, the reflections $\underline{\Sigma_{P^*_j}}$ and $\underline{\Sigma_{Q^*_l}}$ are also symmetries. Thus $\xi_{m,k}$ is also symmetric under the halfturns
$\underline{\gamma^*_{j,l}}=\underline{\Sigma_{Q^*_l}}\cdot\underline{\Sigma_{P^*_j}}$, as well as under $\RP$ and $\RQ$, where $\RP$ is the rotation about $\gamma$ by angle $\frac{2\pi}{m+1}$ and  $\RQ$ is the rotation about $\gamma^{\perp}$ by angle $\frac{2\pi}{k+1}$, that is, by these two elements  
\[	\RP(z_1,z_2)=(z_1e^{i\frac{2\pi}{m+1}},z_2),\quad
\RQ(z_1,z_2)=(z_1,z_2 e^{i\frac{2\pi}{k+1}})
\]
in the maximal torus $SO(2)\times SO(2)=U(1)\times U(1)<U(2) < SO(4)$ preserving $\gamma\cup\gamma^{\perp}$ setwise.

One can readily check that $\xi_{m,k}$ is symmetric with respect to all the following groups:
\begin{equation}\label{eq-groups}
\left\{
\begin{aligned}
&\CC=\la \RP,\RQ\ra < SO(4),\\
&\CC^P=\CC\rtimes\la\underline{\Sigma_{P_0^*}}\ra < O(4),\\
&\CC^Q=\CC\rtimes\la\underline{\Sigma_{Q_0^*}}\ra < O(4),\\
&\hCC=\CC\rtimes\la\underline{\Sigma_{P_0^*}}, \underline{\Sigma_{Q_0^*}}\ra =\la\underline{\Sigma_{P_0^*}}, \underline{\Sigma_{P_1^*}}, \underline{\Sigma_{Q_0^*}}, \underline{\Sigma_{Q_1^*}}\ra< O(4)\\
&G_{m,k}=\CC\rtimes \la\underline{\gamma_{1,1}}\ra=\la \mathcal{R}_P,\mathcal{R}_Q,\underline{\gamma_{1,1}}\ra < SO(4), \\
&G^*_{m,k}=\CC\rtimes \la\underline{\gamma^*_{1,1}}\ra=\la \mathcal{R}_P,\mathcal{R}_Q,\underline{\gamma^*_{1,1}}\ra < SO(4),\\
&\check{G}_{m,k}=G^*_{m,k}\rtimes\la\underline{\Sigma_{P_0^*}}\ra=G^*_{m,k}\rtimes\la\underline{\Sigma_{Q_0^*}}\ra=\hCC <O(4),\\
 &\tg=G_{m,k}\rtimes \la\underline{\gamma^*_{1,1}}\ra=\la \mathcal{R}_P,\mathcal{R}_Q, \underline{\gamma_{1,1}}, \underline{\gamma^*_{1,1}}\ra <SO(4),\\
&{G}_{m,k}^{P}=G_{m,k}\rtimes\la\underline{\Sigma_{P_0^*}}\ra < O(4),\\
 &{G}_{m,k}^{Q}=G_{m,k}\rtimes \la\underline{\Sigma_{Q_0^*}}\ra <O(4),\\
 &\hg=\tg\rtimes\la\underline{\Sigma_{P_0^*}}\ra=\tg\rtimes\la\underline{\Sigma_{Q_0^*}}\ra < O(4).
\end{aligned}
\right.
\end{equation}
 Note that the subgroup $R_{m,k}<SO(4)$ depends on $\gamma$ and $\gamma^{\perp}$ --- that is, on the choice of maximal torus $SO(2)\times SO(2)$ --- but it is independent of the points $\{P_j,Q_l\}=\xi_{m,k}\cap(\gamma\cup\gamma^{\perp})$. In fact, all the other groups contain involutions fixing some points on  $\gamma$ or $\gamma^\perp$, and though some are isomorphic as groups, with the exception $\check{G}_{m,k}=\hCC$, they act differently on $\xi_{m,k}\subset \s$.

The relations between the groups in \eqref{eq-groups} are as follows:
\begin{equation}\label{eq-G-seq}
\begin{aligned}
 \xymatrix{
       &   {G}_{m,k}^{P},{G}_{m,k}^{Q}\ar[dl]^{2:1}_{\vartriangleright} &  \ar[l]^{\quad2:1}_{\quad\vartriangleright} G_{m,k} \ar[dl]^{2:1}_{\vartriangleright}&   \\
\hg &   \tg  \ar[l]^{2:1}_{\vartriangleright}  & &\ar[ll]^{4:1}_{\vartriangleright} \CC   \ar[ul]^{2:1}_{\vartriangleright}\ar[dl]^{2:1}_{\vartriangleright}   \\
         &\check{G}_{m,k} \ar[ul]^{2:1}_{\vartriangleright} & G^*_{m,k}   \ar[l]^{2:1}_{\vartriangleright}  \ar[ul]^{2:1}_{\vartriangleright}    & \\
}
\end{aligned}
\end{equation}
Obviously each index-2 subgroup is normal in its supergroup.
To see $\CC$ is normal in both $\tg$ and $\hg$, observe that the halfturns $\underline{\gamma_{j,l}^{\ } }, \underline{\gamma^*_{j,l}}$
and the reflections $\underline{\Sigma_{P^*_j}}$, $\underline{\Sigma_{Q^*_l}}$ preserve or reverse the cyclic orders of $\{P_j\}\subset\gamma$ or $\{Q_l\}\subset\gamma^{\perp}$, but conjugation by any of these preserves both orders, so conjugation in both $\tg$ and $\hg$ preserves $\CC$ (see diagram \eqref{eq-G-seq-Rmk} below). 

For $m=k$ an extra symmetry  $\underline{\varepsilon}(z_1,z_2):=(z_2,z_1)$ in $ U(2) < SO(4)$ exchanges $\gamma$ and $\gamma^\perp$, generating an extension $\overline{G}_{m,m}:=\widehat{G}_{m,m}\rtimes \la\underline{\varepsilon}\ra<O(4)$ of  degree $2$. We can now state the following basic observation of Kapouleas and Wiygul.

\begin{lemma}{\rm(Lemma 3.10 of \cite{KaW2})} The full symmetry group\footnote{Here $\hg$ and $\tg$ are the groups $\mathcal{G}$  and  $\mathcal{G}^{\mathbf{C}}$ of \cite{KaW2}, but they shift Lawson's \cite{Lawson} (and our) indices $m,k$ by 1.} of $\xi_{m,k}$ is $\hg$ when $m>k\geq1$, and the full symmetry group of $\xi_{m,m}$ is  $\overline{G}_{m,m}$  when $m=k>1$.
\end{lemma}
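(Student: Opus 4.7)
The inclusion $\hg \subseteq \mathrm{Stab}_{O(4)}(\xi_{m,k})$, and when $m=k$ the inclusion $\overline{G}_{m,m} \subseteq \mathrm{Stab}_{O(4)}(\xi_{m,m})$, are already recorded by the construction, so the task is the reverse: every $\sigma \in \mathrm{Stab}_{O(4)}(\xi_{m,k})$ must lie in the named group. My plan is to first show that $\sigma$ setwise preserves the distinguished pair $\gamma \cup \gamma^\perp$, and then to perform a finite enumeration inside the resulting torus-normalizer.

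The crucial first step rests on the observation that
\[
\gamma \cup \gamma^\perp \;=\; \mathrm{Fix}_{\s}(\mathcal{R}_P) \cup \mathrm{Fix}_{\s}(\mathcal{R}_Q)
\]
is precisely the locus of points in $\s$ with nontrivial $\CC$-isotropy; hence preserving $\gamma \cup \gamma^\perp$ setwise is equivalent to normalizing $\CC$ in $O(4)$. To force $\sigma \in N_{O(4)}(\CC)$ I would invoke Schoen's version of the Alexandrov moving plane method as in \cite{KaW2}, to show that the planes of reflective symmetry of $\xi_{m,k}$ are exactly the $\Sigma_{P_j^*}$ and $\Sigma_{Q_l^*}$ and to identify the halfturn axes similarly; alternatively, one may characterize $\CC$ intrinsically as the (essentially unique) maximal abelian subgroup of ambient rotations preserving $\xi_{m,k}$ and contained in a maximal torus of $SO(4)$. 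Either route places $\sigma$ inside $N_{O(4)}(T) = (O(2)\times O(2))\rtimes\langle\underline{\varepsilon}\rangle$ for $T = SO(2)\times SO(2)$, which is exactly the $O(4)$-stabilizer of $\gamma\cup\gamma^\perp$.

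With $\sigma$ constrained to $N_{O(4)}(T)$, the remaining step is combinatorial: $\sigma$ must permute the vertex sets $\{P_j\}\subset\gamma$ and $\{Q_l\}\subset\gamma^\perp$ consistently with the tile structure of $\xi_{m,k}$. When $m>k$, the local rotational isotropy order $m+1$ at a point of $\gamma$ (from $\mathcal{R}_P$) differs from the order $k+1$ at a point of $\gamma^\perp$ (from $\mathcal{R}_Q$), so $\underline{\varepsilon}$ is excluded, $\sigma$ preserves $\gamma$ and $\gamma^\perp$ individually, and a direct enumeration yields exactly $\hg$, of order $8(m+1)(k+1)$. When $m=k>1$ the two isotropy orders coincide, $\underline{\varepsilon}$ is admitted as an extra symmetry, and the finite enumeration instead produces the index-$2$ extension $\overline{G}_{m,m}$, of order $16(m+1)^2$.

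The main obstacle is the intrinsic identification in the second step --- ruling out accidental ambient symmetries of $\xi_{m,k}$ that fail to normalize $\CC$. Making this rigorous is the real substance of Lemma 3.10 of \cite{KaW2}, where the Alexandrov--Schoen moving plane method does the decisive work.
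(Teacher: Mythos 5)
First, a point of comparison: the paper does not prove this lemma at all --- it is imported verbatim (with an index shift) as Lemma~3.10 of \cite{KaW2}, so the authors' ``proof'' is the citation itself and there is no in-paper argument to measure yours against. Judged on its own terms, your outline has the right architecture: the forward inclusions $\hg\subseteq\mathrm{Stab}_{O(4)}(\xi_{m,k})$ and (for $m=k$) $\overline{G}_{m,m}\subseteq\mathrm{Stab}_{O(4)}(\xi_{m,m})$ do follow from Lawson's construction and the uniqueness of the Plateau disks, and once one knows every symmetry lies in the stabilizer $(O(2)\times O(2))\rtimes\la\underline{\varepsilon}\ra$ of $\gamma\cup\gamma^{\perp}$, a finite enumeration (tracking the induced permutations of $\{P_j\}$ and $\{Q_l\}$ and excluding the elements that carry $\xi_{m,k}$ to its odd companion $\xi^o_{m,k}$) does yield groups of the stated orders $8(m+1)(k+1)$ and $16(m+1)^2$.

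The genuine gap is exactly where you say ``the real substance'' lies, and your two proposed routes do not fill it. The first defers to ``the moving plane method as in \cite{KaW2}'', i.e.\ to the very result being proved; the second --- characterizing $\CC$ as the maximal abelian rotation subgroup preserving $\xi_{m,k}$ --- is circular, since identifying a maximal abelian subgroup of the stabilizer presupposes knowing the stabilizer. What is actually needed is an \emph{intrinsically} defined subset of $\xi_{m,k}$ (for instance the vertex set $\{P_j,Q_l\}$, singled out as the points where $m+1$, respectively $k+1$, of the geodesics $\gamma_{j,l}\subset\xi_{m,k}$ cross, or equivalently by the local structure of the second fundamental form) whose linear span recovers $\gamma$ and $\gamma^{\perp}$; any ambient symmetry preserves this set and hence the pair of circles. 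A further small error: your asserted equivalence between preserving $\gamma\cup\gamma^{\perp}$ setwise and normalizing $\CC$ fails in one direction when $m\neq k$, since an element of $(O(2)\times O(2))\rtimes\la\underline{\varepsilon}\ra$ that swaps the two circles conjugates $\RP$ to a rotation of order $m+1$ fixing $\gamma^{\perp}$, which does not lie in $\CC$; only the implication you actually need (normalizing $\CC$ forces preservation of the union of the fixed loci, which is $\gamma\cup\gamma^{\perp}$) is correct. Given that the paper itself treats this statement as a black box from \cite{KaW2}, the appropriate move here is to cite it rather than to sketch an incomplete reproof.
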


\subsection{The spherical orbifold $\s/\CC$}

Here we discuss the geometry and topology of the spherical 3-orbifold $\s/\CC$ and its symmetries induced by halfturns or reflections of $\s$,
then study the image $\pi(\xi_{m,k})=\xi_{m,k}/\CC$ of the Lawson minimal surface $\xi_{m,k}$ in $\pi(\s)=\s/\CC$ under the natural degree-$(m+1)(k+1)$ orbifold covering map
\begin{equation}
    \label{eq-pi}
\hp:\s\to \s/\CC
\end{equation}
with singular set $\gamma\cup\gamma^{\perp}$ (see subsection \ref{sec-singular}).  The orbifold $\pi(\s)=\s/\CC$ inherits a locally spherical geometry away from the singular set where it is spherically conical and has isometry group \[\text{Iso}(\s/\CC)=\left\{
\begin{split}
   &(O(2)\times O(2))/\CC \hbox{ when } m\neq k,\\
   &((O(2)\times O(2))\rtimes\la\underline{\varepsilon}\ra)/\CC \hbox{ when } m=k.\\
\end{split}\right.\]

\subsubsection{Topology of the 3-orbifold $\s/\CC$}
Recall that the 3-sphere is homeomorphic to the join of two circles: a \emph{geometric join} realizing the round sphere \[\s=\gamma\bowtie\gamma^\perp=\cup_{P\in\gamma,Q\in\gamma^{\perp}} \,P\bowtie\!Q\] results by letting $P\bowtie Q$ denote the shortest geodesic segment $\{P\cos t+Q\sin t \,\, | \,\,t\in [0,\frac{\pi}{2}]\}$ between $P$ and $Q$. This geometric join respects the $\CC$ action on $\s$.

Each of the quotients \[\mathbb{S}_1:=\pi(\gamma)=\gamma/\CC=\gamma/\RQ \quad \hbox{ and }\quad  \mathbb{S}_2:=\pi(\gamma^\perp)=\gamma^\perp/\CC=\gamma^\perp/\RP\]
is still homeomorphic to a circle, so $\s/\CC=\mathbb{S}_1\bowtie\mathbb{S}_2$ is still homeomorphic to $\s$.

\subsubsection{Flip-symmetries of $\s/\CC$}

Decompose each circle $\mathbb{S}_j$ into two vertices $\V_j^+$, $\V_j^-$ and two edges $\E_j^+$, $\E_j^-$ as in the figure below, with  $\V_j^{*+}$ and $\V_j^{*-}$ being the midpoints of $\E_j^+$ and of $\E_j^-$,
respectively, chosen such that
\begin{align*}
\hp(P_{2u})=\V_1^+,\quad \hp(P_{2u+1})=\V_1^-,&\quad \hp(P^*_{2u})=\V_1^{*+},\quad \hp(P_{2u+1}^*)=\V_1^{*-},\\
\hp(Q_{2u})=\V_2^+, \quad\hp(Q_{2u+1})=\V_2^-,&\quad\hp(Q_{2u}^*)=\V_2^{*+}, \quad\hp(Q_{2u+1}^*)=\V_2^{*-}.
\end{align*}
The resulting marked circle $\mathbb{S}_j$ has several symmetries: $\F_j^v$ is the \emph{vertical flip} with $\F_j^v(\V_j^{\pm})=\V_j^{\pm}$ and $\F_j^v(\E_j^{\pm})=\E_j^{\mp}$; similarly $\F_j^h$ is the  \emph{horizontal flip} with $\F_j^h(\V_j^{\pm})=\V_j^{\mp}$ and $\F_j^h(\E_j^{\pm})=\E_j^{\pm}$.
\begin{figure}[htbp]
	\includegraphics[width=0.4\textwidth]{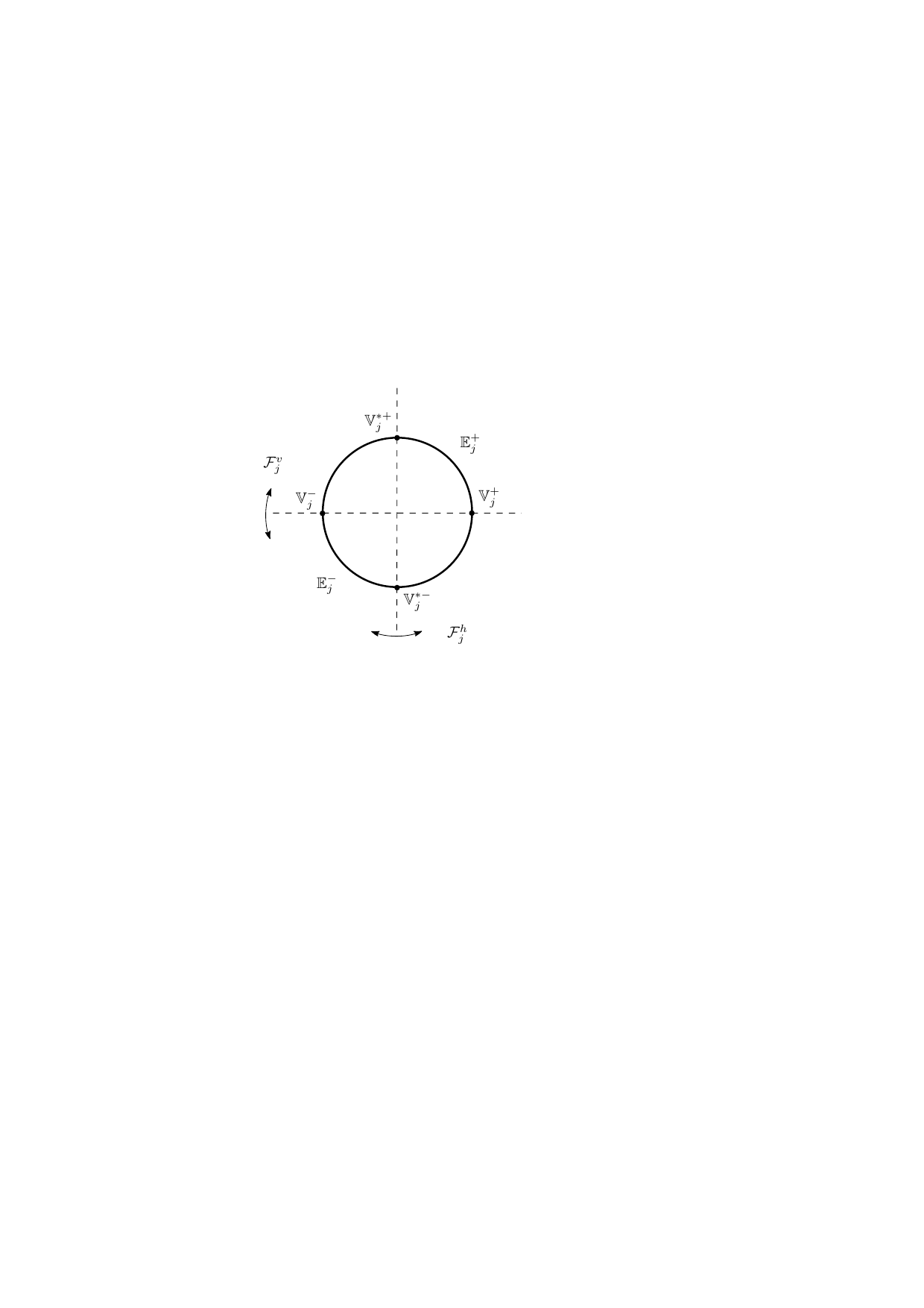} 
\end{figure}

The spherical reflections $\underline{\Sigma_{P_j}}$ and $\underline{\Sigma_{Q_l}}$ induce $\F_1^v$ and $\F_2^v$, while $\underline{\Sigma_{P^*_j}}$ and $\underline{\Sigma_{Q^*_l}}$ induce $\F_1^h$ and $\F_2^h$, respectively.  These flips generate symmetry groups of  deck transformations acting on orbifold coverings  $\s/\CC\to\s/G$ with $G\leqslant\hg$ from \eqref{eq-groups} that are related as follows:
\begin{equation}\label{eq-G-seq-Rmk}
\begin{aligned}
\xymatrix{
	     &   \langle\F_1^v\cdot\F_2^v,\F_j^h \rangle\ar[dl]^{2:1}_{\vartriangleright} &  \ar[l]^{2:1}_{\vartriangleright} \langle\F_1^v\cdot\F_2^v \rangle \ar[dl]^{2:1}_{\vartriangleright}&   \\
	\langle\F_1^v\cdot\F_2^v,\F_1^h,\F_2^h \rangle &   \langle\F_1^v\cdot\F_2^v,\F_1^h\cdot\F_2^h \rangle  \ar[l]^{2:1}_{\vartriangleright}  &  & \ar[ll]^{4:1}_{\vartriangleright}  \mathbbm{1}   \ar[ul]^{2:1}_{\vartriangleright}\ar[dl]^{2:1}_{\vartriangleright}   \\
	&\langle\F_1^h,\F_2^h \rangle \ar[ul]^{2:1}_{\vartriangleright} & \langle\F_1^h\cdot\F_2^h \rangle   \ar[l]^{2:1}_{\vartriangleright}  \ar[ul]^{2:1}_{\vartriangleright}   & }
 \end{aligned}
\end{equation}
One easily checks all their nontrivial elements have order 2, so the groups in \eqref{eq-G-seq-Rmk} are abelian:
$\langle\F_1^v\cdot\F_2^v,\F_1^h,\F_2^h \rangle \cong \Z_2\oplus \Z_2\oplus \Z_2$
and $\langle\F_1^v\cdot\F_2^v,\F_j^h \rangle\cong \langle\F_1^v\cdot\F_2^v,\F_1^h\cdot\F_2^h \rangle \cong\langle\F_1^h,\F_2^h \rangle
\cong  \Z_2\oplus \Z_2.$  Indeed, the  groups in \eqref{eq-G-seq-Rmk} are essentially the quotients of those in \eqref{eq-G-seq} modulo $R_{m,k}$.

\subsubsection{A fundamental domain in $\s$ for $\CC$}\label{sec-singular}
A convenient fundamental domain for the action of $\CC$ on $\s$ is
	$\mathcal D:=\TT_{0,0}\cup\TT_{0,1}\cup\TT_{1,0}\cup\TT_{1,1}$ and thus the orbifold $\s/\CC$ can also be viewed as $\mathcal D$ with boundary faces identified (left to right and top to bottom, as in Figure \ref{Fig:S3C}):
\begin{figure}[htbp]
\includegraphics[width=0.45\textwidth]{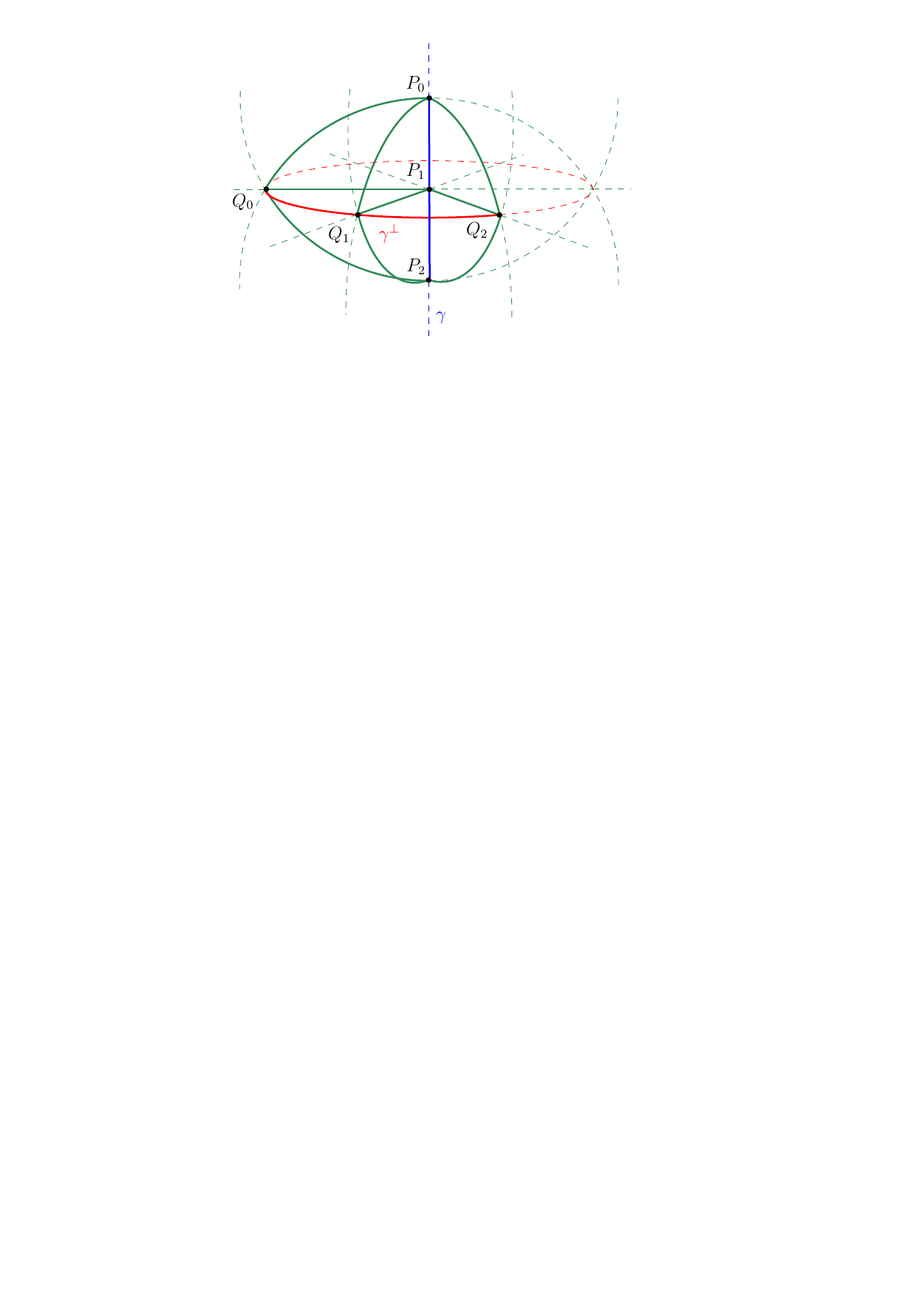}\hspace{8mm}
	\includegraphics[width=0.4\textwidth]{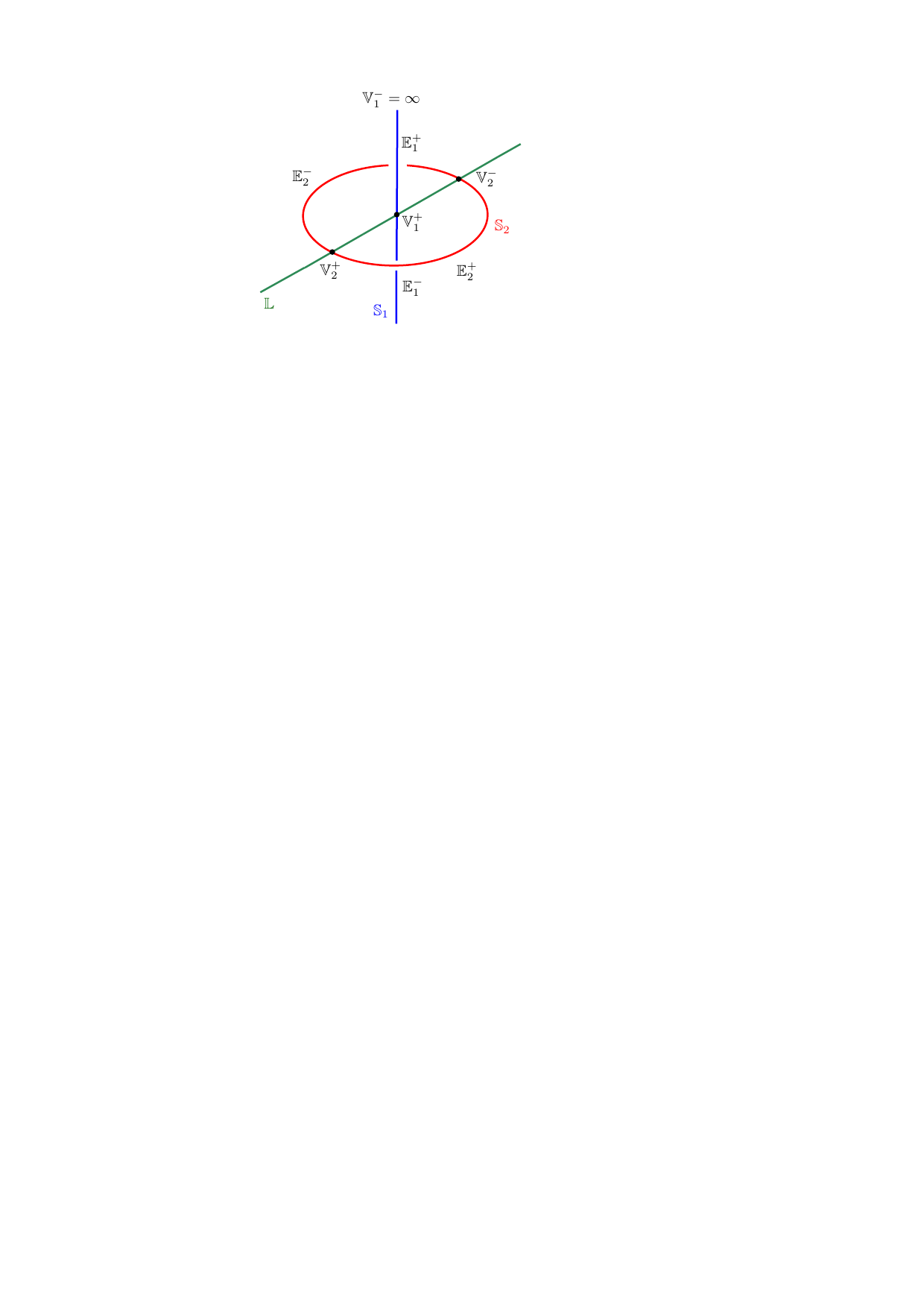}
	\caption{The fundamental domain $\mathcal D$ of $\CC$ and the orbifold $\s/R_{m,k}$}\label{Fig:S3C}
\end{figure}

The points on $\mathbb{S}_1\subset\s/\CC$ (the blue circle in Figure \ref{Fig:S3C}) are called {\em $m$-singular}, the points on $\mathbb{S}_2$ (the red circle) are {\em  $k$-singular}, and the remaining points in $\s/\CC$ are {\em regular}.  Define the {\em Lawson circle $\mathbb{L}$} (the green circle in Figure \ref{Fig:S3C}) to be the image under $\hp:\s\to\s/\CC$ of any halfturn great circle on $\xi_{m,k}$:
\begin{equation}\label{eq-L}
\mathbb{L}:=\hp(\cup\gamma_{j,l})=\cup\{\V_1^\pm\}\bowtie\{\V_2^{\pm}\}\subset\s/\CC.
\end{equation}
Note that any halfturn $\underline{\gamma_{j,l}}$ induces the halfturn $\F_1^v\cdot\F_2^v=\underline{\mathbb{L}}$ around $\mathbb{L}$ in $\s/\CC$. Similarly, we have $\F_1^h\cdot\F_2^h=\underline{\mathbb{L}^*}$ for the {\em dual Lawson circle} $\mathbb{L}^*=\hp(\cup\gamma_{j,l}^*)$.

\subsection{Locating $\pi(\xi_{m,k})$ and  $\pi(\xi^*_{m,k})$ in $\s/\CC$}
Here we locate in  $\s/\CC$ our four example surfaces: $\pi(\xi_{m,k})$ and $\pi(\xi^*_{m,k})$ along with their odd companions $\pi(\xi^o_{m,k})$ and $\pi(\xi^{*o}_{m,k})$.
We also determine where each surface meets the four circles $\mathbb{L}$, $\mathbb{L^*}$, $\mathbb{S}_1$ and $\mathbb{S}_2$.
\begin{example}
	A fundamental domain of $\xi_{m,k}$ under the action of $\CC$ is a disk bounded by arcs $P_jQ_l$ $ (0\leq j,l\leq2)$ lying in the even tiles $\TT_{0,0}\cup\TT_{1,1}$.
	Its projection $\hp(\xi_{m,k})$ is topologically a sphere (Figure \ref{Fig:lawson}).
	Its odd companion $\xi_{m,k}^o:=\underline{\Sigma_{P_0}}(\xi_{m,k})$  has a fundamental domain lying in the odd tiles $\TT_{0,1}\cup\TT_{1,0}$.
	 Note that $\xi_{m,k}$ meets $\gamma$, $\gamma^\perp$ and every $\gamma^*_{j,l}$ orthogonally, so its projection $\hp(\xi_{m,k})$ is topologically a sphere containing $\mathbb{L}$ meeting  each of the circles $\mathbb{S}_j$ and $\mathbb{L}^*$ in $\s/\CC$ transversally and exactly twice (see Figure \ref{Fig:lawson}; for $\xi_{m,k}^o$, we replace $\mathbb{A}^{\pm}$ by  $\mathcal{F}^v_1(\mathbb{A}^{\pm})$): \[\pi(\xi_{m,k})\supset\mathbb{L},~~ \pi(\xi_{m,k})\cap \mathbb{S}_j=\{\V_j^{+}, \V_j^{-}\},~~\pi(\xi_{m,k})\cap \mathbb{L}^*=:\{\mathbb{A}^{+}, \mathbb{A}^{-}\}.\]
Here we have set $\mathbb{A}^{+}$ to be the projection of the centers of the Plateau disks $\delta_{j,l}$ for $(j,l)$ both even, and $\mathbb{A}^{-}$ to be the projection of the centers of the Plateau disks $\delta_{j,l}$ for $(j,l)$ both odd.
	\begin{figure}[h]
		\includegraphics[width=.9\textwidth]{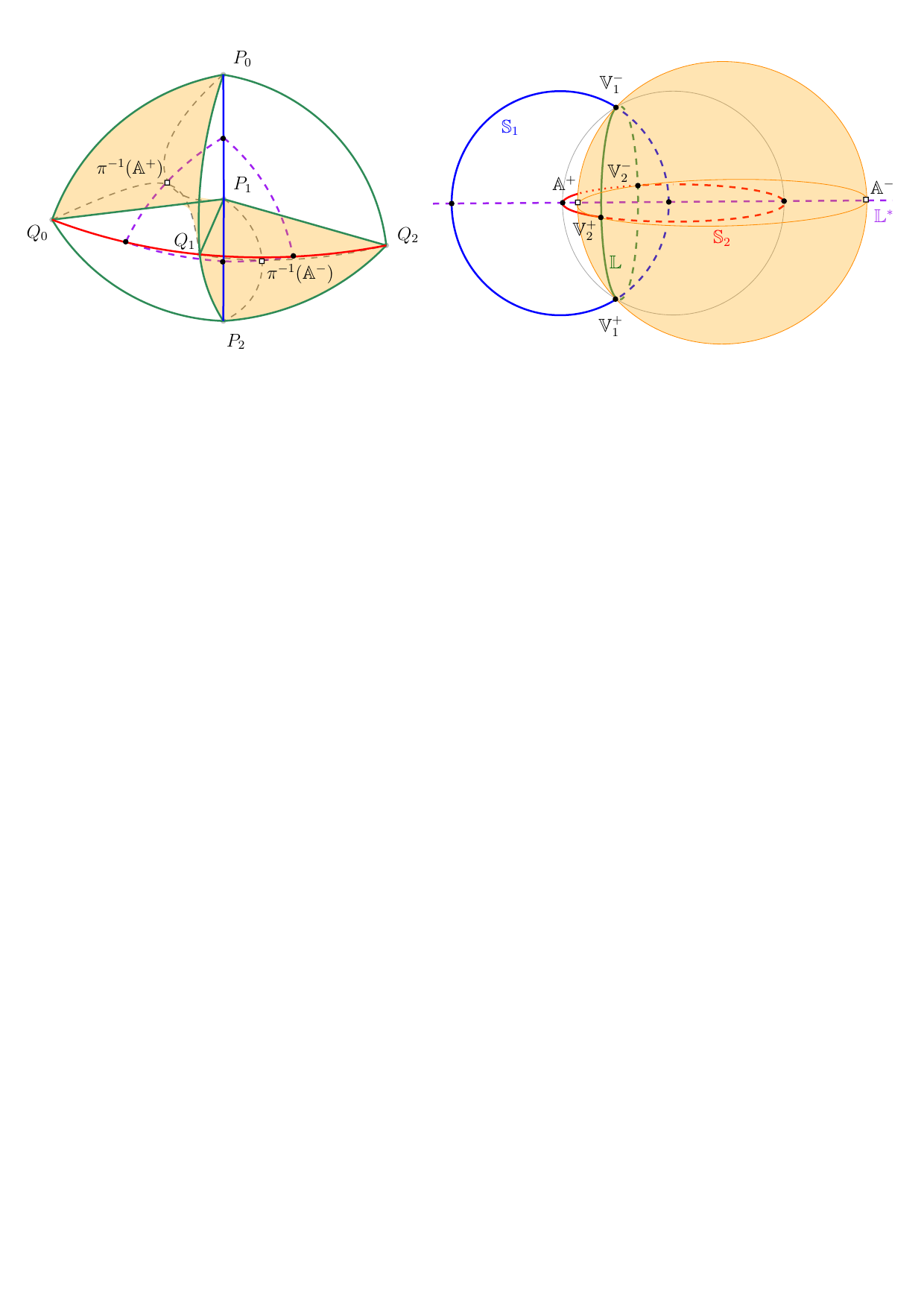}
		\caption{$\xi_{m,k}$ in $\DD$ and $\xi_{m,k}/\CC$ in $\s/\CC$}\label{Fig:lawson}
	\end{figure}

\end{example}

\begin{example} Similarly, the dual surface $\xi^*_{m,k}$ meets $\gamma$, $\gamma^\perp$ and every $\gamma_{j,l}$ orthogonally, so its projection $\hp(\xi^*_{m,k})$ is topologically a sphere containing $\mathbb{L}^*$ and meeting each of the circles $\mathbb{S}_j$ and $\mathbb{L}$ in $\s/\CC$ transversally and exactly twice (Figure \ref{Tmk}; for  $\xi_{m,k}^{*o}$ we  replace $\mathbb{A}^{*\pm}$ with  $\mathcal{F}^h_1(\mathbb{A}^{*\pm})$), where $\pi^{-1}(\mathbb{A}^{*\pm})$ is the set of centers for the dual Plateau disks $\{\delta^*_{j,l}\}$):
\[\pi(\xi^*_{m,k})\supset\mathbb{L^*},~~ \pi(\xi^*_{m,k})\cap \mathbb{S}_j=\{\V_j^{*+}, \V_j^{*-}\},~~\pi(\xi_{m,k})\cap \mathbb{L}=:\{ \mathbb{A}^{*+}, \mathbb{A}^{*-}\}.\]
	\begin{figure}[h]
		\includegraphics[width=.9\textwidth]{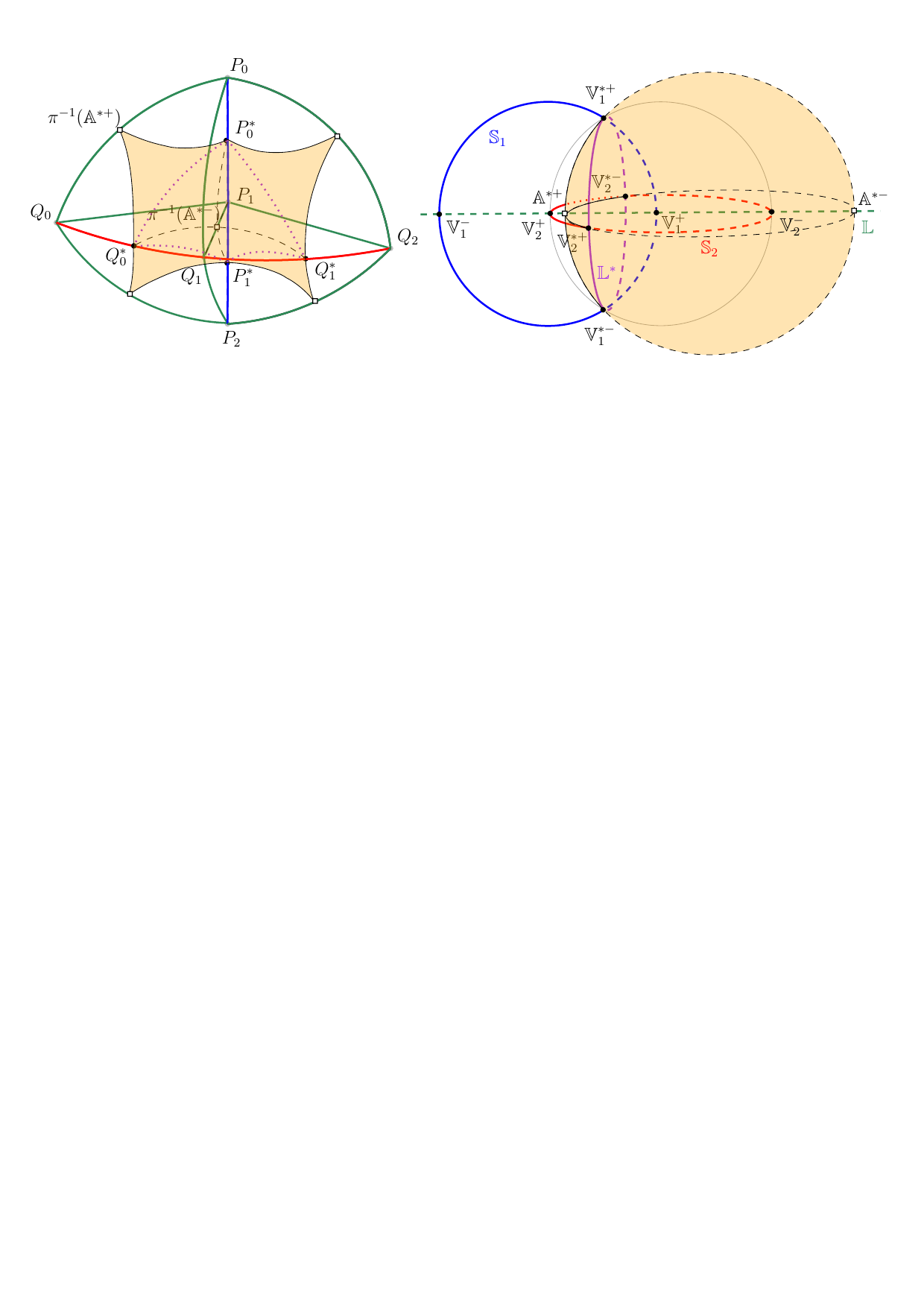}
		\caption{ $\xi^*_{m,k}$ in $\DD$ and  $\xi^*_{m,k}/\CC$ in $\s/\CC$}
		\label{Tmk}
	\end{figure}	
\end{example}

\begin{remark}
The illustrations above suggest how to locate a $G_{m,k}$-symmetric surface $M\subset \s$ of genus between $2$ and $mk$ in terms of the intersection pattern of $\pi(M)\subset\s/\CC$
with the four circles $\mathbb{L}$, $\mathbb{L^*}$, $\mathbb{S}_1$ and $\mathbb{S}_2$ in $\s/\CC$  (see Section 5).
\end{remark}

\section{Embedded $\g$-symmetric surfaces projected to $\s/\CC$}

In this section, we slightly relax the genus condition, and assume $M\subset \s$ is a closed, connected, embedded, $\g$-symmetric surface of genus $g$ with $1<g\leq mk+k$, where $\CC\vartriangleleft\g$ are the symmetry groups from (\ref{eq-groups}).
We determine the topology of the orbifold quotient surface $\hp(M)=M/\CC$ and its relative position with $\mathbb{S}_1$, $\mathbb{S}_2$ and $\mathbb{L}$ in $\hp(\s)=\s/\CC\cong\s$ as follows.
\begin{theorem}\label{thm-inter}
	Suppose $M\subset \s$ is a closed, connected, embedded, $\g$-symmetric surface of genus $g$ with $1<g\leq mk+k$.  Then $g=mk$ and $\hp(M)$ is homeomorphic to a $2$-sphere, and also $v_1=v_2=2$, where
 \begin{equation*}
v_j=\sharp \{\hp(M)\cap\mathbb{S}_j\},\quad j=1,2.
\end{equation*}
Moreover, either $\mathbb{L}\subset\hp(M)$, or $\hp(M)$ meets $\mathbb{L}$ transversally at exactly two points. 
\end{theorem}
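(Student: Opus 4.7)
The plan is to combine the orbifold Riemann--Hurwitz formula for the covering $\hp|_M : M \to \hp(M) = M/\CC$ (of degree $(m+1)(k+1)$) with three ingredients: (i) intersection theory in $\s/\CC \cong \s$; (ii) connectedness of $M$, which forces surjectivity of the orbifold monodromy $\rho : \pi_1^{\mathrm{orb}}(\hp(M)) \to \CC$; and (iii) the induced halfturn $\underline{\mathbb{L}}$ of $\s/\CC$ preserving $\hp(M)$. I begin with a transversality observation: at any $p \in M \cap \gamma$ the element $\RP$ acts on $T_p M$ with determinant $+1$ (since $M$ is orientable and $\CC \subset SO(4)$), hence either as the identity (ruled out, since $\mathrm{Fix}(\RP)|_{T_p\s} = T_p\gamma$ is $1$-dimensional while $T_p M$ is $2$-dimensional) or as a halfturn, whose $(-1)$-eigenspace forces $T_p M$ to be the plane normal to $T_p\gamma$. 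Thus $M$ meets $\gamma$ (and symmetrically $\gamma^\perp$) transversally, so $\hp(M)$ is an orbifold whose underlying orientable surface $|\hp(M)|$ has some genus $h$, carrying $v_1$ cone points of order $m+1$ on $\mathbb{S}_1$ and $v_2$ of order $k+1$ on $\mathbb{S}_2$.

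Riemann--Hurwitz then gives
\[
v_1\, m(k+1) + v_2\, k(m+1) \;=\; 2\bigl[(m+1)(k+1)(1-h) + g - 1\bigr]. \qquad (\ast)
\]
Since $|\hp(M)|$ is a closed orientable surface in $\s/\CC$ and $H_2(\s)=0$, its signed intersection with any closed curve vanishes, so $v_1$, $v_2$, and $N_L := \#(\hp(M) \cap \mathbb{L})$ are all even. For $h \geq 2$ the right side of $(\ast)$ is negative when $g \leq mk+k$, impossible. For $h = 1$, $(\ast)$ reads $v_1 m(k+1) + v_2 k(m+1) = 2(g-1)$; using $m \geq k$ we have $g-1 \leq mk+k-1 < \min\{m(k+1),\, k(m+1)\}$, so divisibility forces $v_1 = v_2 = 0$, giving $g=1$ and contradicting $g>1$. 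Hence $h = 0$.

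Next I would invoke connectedness of $M$: the local monodromy $\rho(x_i)$ around a cone point $x_i$ of $\hp(M)$ on $\mathbb{S}_j$ generates the stabilizer of its lift in $\gamma \cup \gamma^\perp$, namely $\langle \RP \rangle$ if $j=1$ and $\langle \RQ \rangle$ if $j=2$. Since $\pi_1^{\mathrm{orb}}(\hp(M))$, with underlying $\stwo$, is generated by the $x_i$, surjectivity of $\rho$ onto $\CC = \langle \RP, \RQ \rangle$ demands that both stabilizers appear as local monodromies; thus $v_1, v_2 \geq 1$, and by evenness $v_1, v_2 \geq 2$. Substituting $h=0$ into $(\ast)$ and rearranging,
\[
(v_1 - 2)\, m(k+1) + (v_2 - 2)\, k(m+1) \;=\; 2(g - mk),
\]
which forces $g \geq mk$; combined with $g \leq mk+k$, the right side is at most $2k$. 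Since $m(k+1), k(m+1) > k$, any $v_j \geq 4$ would overshoot, so $v_1 = v_2 = 2$ and $g = mk$.

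For the final dichotomy I would set $\sigma := \underline{\mathbb{L}}|_{\hp(M)}$, a smooth involution of $|\hp(M)| \cong \stwo$. Because $\underline{\mathbb{L}}$ preserves ambient orientation on $\s/\CC$ and $\hp(M)$ is $2$-sided, $\sigma$ preserves orientation on $\hp(M)$ exactly when $\underline{\mathbb{L}}$ preserves the two sides of $\hp(M)$. If $\underline{\mathbb{L}}$ swaps sides, then any point of $\mathbb{L} \setminus \hp(M)$ would be moved to the opposite side while remaining pointwise fixed by $\underline{\mathbb{L}}$, a contradiction; hence $\mathbb{L} \subset \hp(M)$. If $\underline{\mathbb{L}}$ preserves sides, then $\sigma$ is an orientation-preserving involution of $\stwo$, so by the Lefschetz fixed-point formula $\#\mathrm{Fix}(\sigma) = L(\sigma) = 2$; the same local orientation argument as before (applied to $\sigma$ on $\hp(M)$) shows these two fixed points are transversal intersections of $\hp(M)$ with $\mathbb{L}$, giving $N_L = 2$. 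The main obstacle is the connectedness/monodromy step: identifying local monodromies with point stabilizers in $\CC$ and invoking $\pi_1^{\mathrm{orb}}$ of a spherical orbifold requires care in the setup, though the underlying idea is elementary.
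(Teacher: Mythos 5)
Your overall strategy --- orbifold Riemann--Hurwitz for $\hp|_M:M\to M/\CC$, parity and size estimates on $v_1,v_2$, and a fixed-point analysis of the induced involution $\underline{\mathbb{L}}$ --- is essentially the paper's. The genus bookkeeping in $(\ast)$, the evenness of the $v_j$, the elimination of $h\geq 1$, and the Lefschetz count of $\mathrm{Fix}(\underline{\mathbb{L}}|_{\hp(M)})$ all go through; the last is exactly the paper's Remark \ref{Riemann-Hurwitz}, and your monodromy argument for $v_1,v_2\geq 1$ (the step you flag as delicate) is a legitimate substitute for the paper's linking-number argument in Lemma \ref{lem:trans}.

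The genuine gap is at the very start, in the transversality claim. The assertion that $\RP$ (resp.\ $\RQ$) acts on $T_pM$ with determinant $+1$ does not follow from orientability of $M$ together with $\CC<SO(4)$: an ambient orientation-preserving symmetry of an orientable embedded surface preserves the surface's orientation only if it preserves its coorientation, and nothing forces that here. Concretely, when $k=1$ the generator $\RQ$ is a halfturn about $\gamma^{\perp}$, with eigenvalues $1,-1,-1$ on $T_p\s$ at $p\in\gamma^{\perp}$, so an invariant plane $T_pM$ may contain $T_p\gamma^{\perp}$, on which $\RQ$ restricts to a reflection of determinant $-1$; embeddedness then forces the whole circle $\gamma^{\perp}$ to lie in $M$ (the two sheets tangent along $\gamma^{\perp}$ must coincide). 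In that case $\hp(M)$ is a surface with boundary $\mathbb{S}_2$, not a closed $2$-orbifold with isolated cone points, and your formula $(\ast)$ simply does not apply. This is precisely the case the paper isolates in Lemma \ref{lem:piM} and disposes of in the first paragraph of its proof of Theorem \ref{thm-inter} by a separate Euler-characteristic computation (forcing $\hat g=0$, then $v_1\geq 3$, hence $g\geq 2m>mk+k$, a contradiction); your write-up needs an analogous supplementary argument before $(\ast)$ can be invoked. For $m\geq 2$ your eigenvalue reasoning does correctly rule out tangency to $\gamma$, since $-1$ is not then an eigenvalue of $\RP$ on the normal plane, so the problem is confined to $\gamma^{\perp}$ in the case $k=1$.
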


\vspace{8mm}
We begin with several topological properties of $\hp(M)$:
\begin{lemma}\label{lem:piM}
The surface $\hp(M)\subset \s/\CC$ is connected, embedded and orientable; its boundary is nonempty if and only if $k=1$ and  $\pi(M)$ is tangent to  $\mathbb{S}_2$, in which case  $\partial (\pi(M))=\mathbb{S}_2$.
\end{lemma}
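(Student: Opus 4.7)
The plan is to analyze how $M$ meets the singular locus $\gamma \cup \gamma^{\perp}$ of the covering map $\pi: \s \to \s/\CC$, since the orbifold topology of $\pi(M)$ is determined locally by this intersection behavior. Since $\CC \vartriangleleft \g$, the surface $M$ is $\CC$-invariant, so $\pi(M) = M/\CC$ is a well-defined embedded $2$-orbifold in $\s/\CC$, and its connectedness follows immediately from that of $M$ and continuity of $\pi$.

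The core step is to classify the intersection of $M$ with $\gamma^{\perp}$ (and, symmetrically, with $\gamma$). At any $p \in M \cap \gamma^{\perp}$, $\CC$-invariance forces the rotation $\RQ$ (by $2\pi/(k+1)$ around $\gamma^{\perp}$) to preserve $T_p M$. Since $\RQ$ fixes $T_p\gamma^{\perp}$ and rotates the normal $2$-plane by $2\pi/(k+1)$, invariance holds in exactly two ways: either $T_p M$ is complementary to $T_p\gamma^{\perp}$ (a transverse intersection, contributing an interior cone point of order $k+1$ to $\pi(M)$), or $T_p\gamma^{\perp} \subset T_p M$ and the induced action on the line $T_p M / T_p\gamma^{\perp}$ is $\pm 1$, which forces $k = 1$. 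In the tangent case, a direct local computation---writing $M$ locally as $z = f(x,y)$ with $\gamma^{\perp}$ the $x$-axis, the halfturn gives $f(x,-y) = -f(x,y)$, hence $f(x,0) = 0$---shows $\gamma^{\perp} \subset M$ on a neighborhood of $p$. The set of such tangency points on $\gamma^{\perp}$ is both open (by the local step) and closed (by continuity of tangent planes), hence either empty or all of $\gamma^{\perp}$. The parallel dichotomy on $\gamma$ uses $\RP$ and $m$.

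If neither $\gamma$ nor $\gamma^{\perp}$ is contained in $M$, then each of $\RP$ and $\RQ$ fixes a point of $\s \setminus M$ on its respective singular circle, so each preserves the component of $\s \setminus M$ containing that fixed point, hence both components, hence the transverse (and therefore intrinsic) orientation of the two-sided $M$. Since $\RP$ and $\RQ$ generate $\CC$, the whole group acts on $M$ by orientation-preserving diffeomorphisms, so $\pi(M)$ is a closed orientable orbifold without boundary. In the remaining case $\gamma^{\perp} \subset M$ (forcing $k = 1$, which under $m \geq k$ is the relevant boundary case), $\RQ$ is a halfturn acting on $M$ as a reflection fixing the curve $\gamma^{\perp}$. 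Taking quotients in the order ``$\RP$ first (orientation-preserving, yielding an orientable orbifold) then $\RQ$ (reflection, adding a boundary)'' produces $\pi(M)$ as an orientable surface with boundary $\gamma^{\perp}/\langle \RP \rangle = \mathbb{S}_2$. The chief technical obstacle is the tangency-implies-containment step; the orientability bookkeeping in the boundary case, though subtle (elements like $\RP^a \RQ$ with $a \neq 0$ act freely and reverse orientation on $M$), is resolved by performing the reflection quotient last.
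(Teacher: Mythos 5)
Your proposal follows essentially the same route as the paper's proof: connectedness and embeddedness descend from $M$, the tangency dichotomy at $M\cap\gamma^{\perp}$ forces $k=1$ and $\gamma^{\perp}\subset M$ in the tangent case, and orientation-preservation of the $\CC$-action on $M$ gives orientability of $\hp(M)$. In several places you are in fact \emph{more} careful than the paper: the linear-algebra classification of $\RQ$-invariant tangent planes, the local graph computation showing tangency implies containment together with the open-closed propagation along $\gamma^{\perp}$, and the argument that $\RP$, $\RQ$ preserve the two components of $\s\setminus M$ (hence the coorientation, hence the orientation of $M$) in the transverse case are all spelled out where the paper only gestures (``at least $k+1$ sheets tangent,'' ``$\CC<SO(4)$ acts by orientation-preserving isometries'').

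The one step that is not actually justified is the orientability of $\hp(M)$ in the boundary case, and your claim that the subtlety ``is resolved by performing the reflection quotient last'' is not a resolution. The quotient of a connected orientable surface by an orientation-reversing involution whose fixed set is a union of circles is orientable if and only if that fixed set separates the surface; for example, a great $2$-sphere containing $\gamma^{\perp}$ with two handles attached along an arc $\alpha$ and its image $\RQ(\alpha)$ joining the two hemispheres is $\RQ$-invariant, contains $\gamma^{\perp}$ as a \emph{non-separating} curve, and has non-orientable quotient. So to close this case you would need to show that $\gamma^{\perp}$ (equivalently $\mathbb{S}_2$ after the $\RP$-quotient) separates, presumably using the full $\g$-symmetry and the genus bound $g\leq mk+k$. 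To be fair, the paper's own one-line orientability argument has exactly the same lacuna, and the boundary case is ultimately shown to be vacuous in the proof of Theorem 3.1 (where $\partial(\hp(M))=\mathbb{S}_2$ leads to $g\geq 2m>m+1$); but as a self-contained proof of the lemma as stated, this step needs either the separation argument or a reformulation of the orientability claim in the tangent case.
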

\begin{proof}
Since $M$ is connected and the covering projection $\hp:\s\to\s/\CC$ is continuous, $\hp(M)$ is connected.
Embeddedness of $\hp(M)=M/\CC$ in $\s/\CC$ is inherited from embeddedness of $M$ in $\s$.
Because $\CC<SO(4)$ acts by orientation-preserving isometries on $\s$, an orientation on $M$ induces an orientation on  $\hp(M)$.

The singular set $\mathbb{S}_1\cup\mathbb{S}_2$ of $\pi(\s)$ is closed, so any nonsingular point $p\in \pi(M)$ has a neighborhood $U\subset\pi(\s)$ disjoint from $\mathbb{S}_1\cup\mathbb{S}_2$.
Thus $U\cap\pi(M)$ is a neighborhood of $p$ in $\pi(M)$, and any boundary points of $\pi(M)$ are singular.
If $\pi(M)$ were tangent to $\mathbb{S}_1$ (or $\mathbb{S}_2$) at some $q$,
then $M$ would have at least $m+1$ sheets in $\s$ tangent to $\gamma$ (or at least $k+1$ sheets tangent to $\gamma^\perp$) at each point of $\pi^{-1}(q)$ due to the $R_{m,k}$-symmetry of $M$.
Since $m\geq k\geq 1$ and $mk>1$, the embeddedness of $M$ implies $k=1$ and $\pi^{-1}(q)\subset\gamma^\perp\subset M$;
thus $\mathbb{S}_2=\hp(\gamma^\perp)\subset\hp(M)$ must be the boundary of $\hp(M)$.
\end{proof}
\begin{lemma} \label{Le-PQ}
If $\pi(M)$ meets $\mathbb{S}_i$ transversally and contains any of  $\{\V_i^+, \V_i^-\}$, then $\mathbb{L}\subset \pi(M)$.
\end{lemma}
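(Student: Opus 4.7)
Plan. Without loss of generality I take $i=1$ and $\V_1^+\in\pi(M)$; by $\CC$-equivariance we may assume $P_0\in M$, and the transversality hypothesis says that $M\pitchfork\gamma$ at $P_0$. The stabilizer $\langle\RP\rangle\leq\CC$ of $P_0$ acts on $T_{P_0}\s = T_{P_0}\gamma\oplus T_{P_0}\gamma^\perp$ as the identity on the first summand and as rotation of angle $2\pi/(m+1)$ on the second; since $m\geq 2$ (which follows from $m\geq k$ together with $g=mk>1$), the only $\RP$-invariant $2$-plane in $T_{P_0}\s$ transverse to $T_{P_0}\gamma$ is $T_{P_0}\gamma^\perp$ itself, so $T_{P_0}M = T_{P_0}\gamma^\perp$.

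The core mechanism is the following principle: if an involution $\tau\in\g$ has fixed locus in $\s$ equal to a great circle $\gamma_{j,l}$, and if $M$ meets $\gamma_{j,l}$ at a point $p$ with $T_p\gamma_{j,l}\subset T_pM$, then $d\tau|_{T_pM}$ is a reflection fixing the line $T_p\gamma_{j,l}$, so the fixed set of $\tau|_M$ is a smooth $1$-submanifold tangent to $\gamma_{j,l}$ at $p$; since this fixed set lies in $\operatorname{Fix}(\tau)=\gamma_{j,l}$, it covers a neighborhood of $p$ in $\gamma_{j,l}$, and standard open-and-closed continuation along $\gamma_{j,l}$ (closedness uses continuity of tangent planes, which lets the same reflection argument be reapplied at any limit point) upgrades this to $\gamma_{j,l}\subset M$ globally. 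A direct computation shows that for each $l=1,\ldots,m+1$ the element $\sigma_l := \RP^{l-1}\RQ^{-1}\underline{\gamma_{1,1}}\in\g$ equals the map $(z_1,z_2)\mapsto(\bar z_1 e^{2\pi i l/(m+1)},\bar z_2)$, is an involution fixing $P_0$, and has fixed great circle $\gamma_{0,l}$ in $\s$. Since $T_{P_0}\gamma_{0,l}\subset T_{P_0}\gamma^\perp = T_{P_0}M$ for every $l$, the principle produces $\gamma_{0,l}\subset M$ for each $l$, exhausting all $m+1$ distinct Lawson circles through $P_0$; consequently every $Q_l\in M$ and $P_{k+1}\in M$.

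For $k\geq 2$, the symmetric procedure at $Q_0$ applies: $\langle\RQ\rangle$ of order $k+1\geq 3$ forces $T_{Q_0}M$ to be the $2$-plane perpendicular to $T_{Q_0}\gamma^\perp$, and the involutions $\RQ^b\RP^{-1}\underline{\gamma_{1,1}}\in\g$ (with fixed circles running over all $\gamma_{j,0}$ through $Q_0$) then yield $\gamma_{j,0}\subset M$ for every $j$. In particular $P_1\in\gamma_{1,0}\subset M$, and one further application of the principle to $\underline{\gamma_{1,1}}$ itself --- which fixes $P_1$ and whose fixed circle in $\s$ is $\gamma_{1,1}$ --- gives $\gamma_{1,1}\subset M$; by $\CC$-equivariance we obtain $\gamma_{j,l}\subset M$ for every $(j,l)$, hence $\mathbb{L}=\pi\bigl(\bigcup_{j,l}\gamma_{j,l}\bigr)\subset\pi(M)$. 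The main obstacle is the boundary case $k=1$, in which $\RQ$ is itself an involution and does not by itself pin $T_{Q_0}M$ down; one must split according to whether $T_{Q_0}M$ is the perpendicular plane (the preceding argument then completes using the now-reduced stabilizer $\operatorname{Stab}_\g(Q_0)$ of order $4$) or contains $T_{Q_0}\gamma^\perp$, in which case Lemma~\ref{lem:piM} forces $\gamma^\perp\subset M$ and a more delicate analysis invoking the involution $\RP^{-1}\underline{\gamma_{1,1}}$ (with fixed circle $\gamma_{1,0}$) together with embeddedness of $M$ is required to close the argument.
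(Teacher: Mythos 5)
Your first stage is correct and takes a genuinely different route from the paper's: where the paper argues entirely at the quotient point $\V_i^+$, showing that the induced halfturn $\underline{\mathbb{L}}_*$ acts as an orientation-reversing involution on $T_{\V_i^+}\pi(M)$ and therefore has a fixed direction lying in $T_{\V_i^+}\mathbb{L}$, you work upstairs and use the order-$(m+1)$ stabilizer $\la\RP\ra$ of $P_0$ to pin down $T_{P_0}M$ outright, which makes all $m+1$ circles $\gamma_{0,l}$ tangent to $M$ at once. Your ``principle'' (reflection differential along the axis circle of a halfturn in $\g$, fixed set locally an arc of that circle, open-and-closed continuation) is sound, and is essentially the mechanism the paper also invokes in its final step ``as in the proof of Lemma~\ref{lem:piM}''. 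For $k\geq 2$ your argument is complete.

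There are, however, two genuine gaps. First, ``without loss of generality $i=1$'' is not a valid reduction: $m$ and $k$ play asymmetric roles, so $i=2$ needs its own (short) treatment --- it is in fact the easier case, since for $i=2$ the transversality hypothesis at $Q_0$ already forces $T_{Q_0}M$ to be the plane orthogonal to $T_{Q_0}\gamma^\perp$ even when $k=1$, the only $\RQ$-invariant $2$-plane not containing $T_{Q_0}\gamma^\perp$ being that orthogonal plane. Second, and more seriously, in the case $i=1$, $k=1$ you explicitly leave the sub-case $T_{Q_0}M\supset T_{Q_0}\gamma^\perp$ unresolved, and this is not a removable technicality inside your framework: there $\gamma^\perp\subset M$, the line $T_{Q_0}\gamma_{1,0}$ is the \emph{normal} to $T_{Q_0}M$, so $d(\underline{\gamma_{1,0}})$ restricts to $-\mathrm{id}$ on $T_{Q_0}M$, the point $Q_0$ is an isolated fixed point of $\underline{\gamma_{1,0}}|_M$, and your principle cannot produce $\gamma_{1,0}\subset M$; consequently $P_1$ and the arcs of $\mathbb{L}$ through $\V_1^-$ are never reached. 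Excluding this configuration seems to require global input: it is precisely the configuration in which $\partial(\pi(M))=\mathbb{S}_2$, which Lemma~\ref{lem:piM} permits and which is ruled out only later, in the proof of Theorem~\ref{thm-inter}, via the genus bound. Since $k=1$ is the principal case of the paper (the surfaces $\xi_{g,1}$ of the main conjecture), you must either supply that analysis or add the hypothesis that $\pi(M)$ is also transverse to $\mathbb{S}_{3-i}$ (equivalently $\partial(\pi(M))=\emptyset$), which is how the lemma is actually applied in the paper.
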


\begin{proof}
For instance, suppose $\V_i^+\in\pi(M)$.  The tangent cones $T_{\V_i^+}\pi(\s)$ and $T_{\V_i^+}\pi(M)$ are homeomorphic to $\R^3$ and $\R^2$ respectively, hence orientable.
The tangent map
\[\underline{\mathbb L}_{*\V_i^+}:T_{\V_i^+}\pi(\s)\rightarrow T_{\V_i^+}\pi(\s)\]of the involution $\underline{\mathbb{L}}:=\F_1^v\cdot\F_2^v$ is an orientation-preserving involution on $T_{\V_i^+}\pi(\s)$ with fixed-set~$T_{\V_i^+}\mathbb{L}$.
Since $\mathbb{S}_i$ meets $\pi(M)$ transversally at $\V_i^+$ and both are $\underline{\mathbb{L}}$-symmetric, both $T_{\V_i^+}\mathbb{S}_i$ and $T_{\V_i^+}\pi(M)$ are $\underline{\mathbb L}_*$-invariant in $T_{\V_i^+}\pi(\s)$, and they meet transversally in their common vertex.
Since $\underline{\mathbb L}_*$ reverses orientation on $T_{\V_i^+}\mathbb{S}_i$, it induces an orientation-reversing involution on $T_{\V_i^+}\pi(M)$, hence on the space of directions in $T_{\V_i^+}\pi(M)$, 
which is
homeomorphic to $\sone$.  But any orientation-reversing homeomorphism of $\sone$ has a fixed point, so
there exists nontrivial $u_0\in T_{\V_i^+}\pi(M)$ such that  $\underline{\mathbb L}_*u_0=\lambda u_0$ for some $\lambda>0$.
Since $\underline{\mathbb L}_*$ is an involution, $u_0=\underline{\mathbb L}_*\!\cdot\underline{\mathbb L}_*u_0=\lambda^2 u_0$, so $\lambda=1$ and $u_0\in T_{\V_i^+}\mathbb{L}$; that is, $\mathbb{L}$ is tangent to $\pi(M)$ at $\V_i^+$.
The $\underline{\mathbb{L}}$-symmetry of $\pi(M)$ and embeddedness of $M$  imply $\mathbb{L}\subset \pi(M)$, as in the proof of Lemma~\ref{lem:piM}.
\end{proof}

\begin{lemma}\label{lem:trans}
    If $\hp(M)$ is homeomorphic to a $2$-sphere, then $v_1 \geq 2$ and $v_2\geq 2$.
\end{lemma}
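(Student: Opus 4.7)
The plan is to combine three ingredients---transversality of the intersections, a parity constraint from ambient topology, and a nontriviality constraint from connectedness of $M$---to conclude $v_j\geq 2$ for $j=1,2$.

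First, since $\hp(M)\cong S^2$ has no boundary, Lemma~\ref{lem:piM} forbids any tangency of $\hp(M)$ with $\mathbb{S}_j$; so each of the finitely many points of $\hp(M)\cap\mathbb{S}_j$ is a transverse intersection inside the topological $3$-sphere $\s/\CC$. Next, because $\s/\CC$ is homeomorphic to $\s$ and hence has $H_2=0$, the oriented topological $2$-sphere $\hp(M)$ is null-homologous there, and its algebraic intersection with the oriented circle $\mathbb{S}_j$ vanishes. Since each transverse intersection contributes $\pm 1$, the positive and negative signs balance, forcing $v_j$ to be even.

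The heart of the argument will be ruling out $v_j=0$. Every nontrivial element of $\CC$ fixes at most a $1$-dimensional subset of $\s$ (either $\gamma$, $\gamma^\perp$, or nothing), so $\CC$ acts faithfully on the $2$-dimensional surface $M$; consequently $\hp|_M\colon M\to\hp(M)$ is a Galois $\CC$-branched cover of degree $(m+1)(k+1)$, branched only over $\hp(M)\cap(\mathbb{S}_1\cup\mathbb{S}_2)$. The $\CC$-stabilizer of any point of $\gamma$ is $\langle\RP\rangle$ and of any point of $\gamma^\perp$ is $\langle\RQ\rangle$, so each local monodromy around a branch point over $\mathbb{S}_1$ lies in $\langle\RP\rangle$ and each over $\mathbb{S}_2$ lies in $\langle\RQ\rangle$. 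I would then use that connectedness of $M$ forces the orbifold monodromy homomorphism $\pi_1^{\mathrm{orb}}(\hp(M))\to\CC$ to be surjective; but if $v_1=0$, every generator comes from an $\mathbb{S}_2$-branch point, so the image sits inside $\langle\RQ\rangle$, a subgroup of index $m+1\geq 2$ in $\CC$. This contradiction gives $v_1\geq 1$, and combined with the parity step, $v_1\geq 2$. Swapping the roles of $\mathbb{S}_1,\mathbb{S}_2$ and of $\RP,\RQ$ yields $v_2\geq 2$ by the identical argument.

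The main care needed is in making the parity (algebraic-intersection) and monodromy arguments work cleanly at the orbifold singularities along $\mathbb{S}_1\cup\mathbb{S}_2$; this is handled by treating $\s/\CC$ as a topological $3$-sphere for the parity step and by passing to the orbifold branched cover $M\to\hp(M)$ for the monodromy step. I do not anticipate any further obstacle.
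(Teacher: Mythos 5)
Your proof is correct, but the key step is carried out by a genuinely different mechanism than the paper's. Both arguments split the lemma into (a) showing $v_j\geq 1$ and (b) upgrading to $v_j\geq 2$ by a parity count, and for (b) you and the paper do essentially the same thing: $\hp(M)$ is a closed embedded $2$-sphere in $\s/\CC\cong\s$, hence null-homologous (equivalently, separating), so the transverse intersection number with $\mathbb{S}_j$ is even; your remark that transversality at the singular locus needs care is well taken, and is exactly what the paper's proof of Lemma \ref{lem:piM} supplies. For (a), however, the paper argues geometrically: it concatenates the $\RP$-translates of a path on $M$ from $A$ to $\RP(A)$ to produce a loop $c\subset M$ with nonzero linking number with $\gamma$, so that $\hp(c)$ links $\mathbb{S}_1$ nontrivially; since $\hp(c)$ bounds a disk on the sphere $\hp(M)$, that disk must meet $\mathbb{S}_1$. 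You instead pass to the Galois branched cover $M\to M/\CC=\hp(M)$ and use that connectedness of $M$ forces the monodromy $\pi_1^{\mathrm{orb}}(\hp(M))\to\CC$ to be surjective, while $v_1=0$ would trap the image in the proper subgroup $\langle\RQ\rangle$ because the orbifold fundamental group of a sphere is generated by meridians of its cone points (this is where you, like the paper, genuinely use the hypothesis $\hp(M)\cong\stwo$, and also the abelianness of $\CC$ to control conjugates of the local monodromies). The two routes are morally dual --- your surjectivity of monodromy is the covering-space shadow of the paper's linking loop --- but yours is more algebraic and yields slightly more (it constrains which subgroups of $\CC$ can appear as the image), whereas the paper's is more elementary and visibly geometric. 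No gap.
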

\begin{proof}
By the proof of Lemma \ref{lem:piM}, $M$ meets $\gamma$ and $\gamma^{\perp}$ at finitely many points.
Choose a path on $M$ connecting  $A\in M$ to $\mathcal{R}_P(A)$ avoiding $\gamma$. By repeated $\mathcal{R}_P$-action on this path we obtain a   loop $c$ on $M$ having nonzero linking number with $\gamma$, and $\hp(c)$ is a  loop on $\hp(M)$ having nonzero linking number with $\mathbb{S}_1$.
Since $\hp(M)$ is a $2$-sphere, $\hp(c)$ bounds a disk on $\hp(M)$ which must meet
$\mathbb{S}_1$.
The case for $\mathbb{S}_2$ is similar.
Obviously $\pi(M)$ is transversal to both of $\mathbb{S}_i$, so $v_i\geq 2$.
\end{proof}

\begin{proof}[Proof of Theorem \ref{thm-inter}]
We claim $\partial (\hp(M))=\emptyset$. Otherwise $\partial (\hp(M))=\mathbb{S}_2$ and $k=1$ by Lemma \ref{lem:piM}. So its manifold Euler number is $\chi(\hp(M))=1-2\hat{g}$ for some natural number~$\hat{g}$.
Since $\mathbb{S}_2$ is a closed curve, it contributes $0$ to  $\chi(\hp(M))$. Thus we only need to deal with the singular points $\hp(M)\cap\mathbb{S}_1$ to compute the Euler number of $M$  (as in  the Riemann-Hurwitz relation):
\[
\chi(M)=2-2g=2(m+1)(1-2\hat{g}-v_1)+2v_1=2+2m-4\hat{g}(m+1)-2mv_1.
\]
Since $g=2\hat{g}(m+1)+m(v_1-1)$, from
\begin{equation}\label{eq:nob}
    2\hat{g}(m+1)-m\leq g\leq mk+k=m+1
\end{equation}
we obtain $\hat{g}=0$. So $\pi(M)$ is a disk bounded by $\mathbb{S}_2$. Note that $\mathbb{S}_2$ is linked with $\mathbb{S}_1$, so $v_1$ is odd because $\pi(M)$ is always transversal to $\mathbb{S}_1$. From $g=m(v_1-1)>1$ we get $v_1\geq 3$ which yields $g\geq 2m>m+1$. 

Therefore, $\partial(\pi(M))=\emptyset$, which means $\pi(M)$ is also transversal to $\mathbb{S}_2$.  Both $v_1$ and $v_2$ are  even since $\pi(M)$ is embedded and separates $\s/\CC$.
Thus $\hp(M)$ must be a closed, embedded surface in $\hp(\s)$ with genus $\hat{g}$, so $\chi(M)$ can be computed (again as in  the Riemann-Hurwitz relation):
\[\chi(M)=2-2g=(m+1)(k+1)\chi_o(\pi(M)).\]
And we have \begin{equation*}
\begin{aligned}
\chi_o(\pi(M))&=2-2\hat{g}-v_1-v_2 +\frac{v_1}{m+1}+\frac{v_2}{k+1}\\
	&=\frac{2-2mk}{(m+1)(k+1)}+\frac{m(2-v_1)}{m+1}+\frac{k(2-v_2)}{k+1}-2\hat{g},
\end{aligned}
\end{equation*}
where $1/(m+1)$ and $1/(k+1)$ are respectively the weights of the $m$-singular and $k$-singular points in the orbifold $\s/\CC$,
and where $\chi_o(\pi(M))$ is the orbifold Euler number of $\hp(M)$. Hence
\[g=mk+(m+1)(k+1)\hat{g}-\frac{m(k+1)(2-v_1)+k(m+1)(2-v_2)}{2}.\]
Note that $v_1=v_2=0$ yields $g=1$ or $g>mk+k$ contrary to our assumption, and so $v_i\geq 2$ for some $i$. Since $g\leq mk+k$, we get \[2(m+1)(k+1)\hat{g}\leq m(k+1)(2-v_1)+k(m+1)(2-v_2)+2k,\] that is,
\begin{equation}\label{hat(g)}
\begin{split}
    \hat{g}&\leq  \frac{m(2-v_1)}{2m+2}+\frac{k(2-v_2)}{2k+2}+\frac{k}{(m+1)(k+1)}\\
    &\leq \frac{2m}{2m+2}+\frac{k}{(m+1)(k+1)}<1.
\end{split}
\end{equation}
So $\hat{g}=0$. By Lemma \ref{lem:trans}, $v_1=v_2=2$.

If $\pi(M)$ meets $\mathbb{S}_1$ at $\V_1^\pm$ then $\mathbb{L}\subset\hp(M)$. Otherwise, consider the sphere $\mathbb{S}_1\bowtie\mathbb{L}^*$, whose intersection with $\pi(M)$ can be assumed to be several simple loops with $\underline{\mathbb{L}}$-symmetry. Since $\mathbb{S}_1$ is transversal to $\pi(M)$, there is a unique simple loop $c^{*}\subset \pi(M)\cap(\mathbb{S}_1\bowtie\mathbb{L}^*)$ meeting $\mathbb{S}_1$ twice, and~$c^*$ must be $\underline{\mathbb{L}}$-symmetric. 
Since $\mathbb{S}_1\bowtie\mathbb{L}^*$ meets $\mathbb{L}$ twice,
$c^*$ bounds two $\underline{\mathbb{L}}$-symmetric disks on~$\mathbb{S}_1\bowtie\mathbb{L}^*$,
each meeting $\mathbb{L}$ once, and so $c^*$ has linking number $1$ with~$\mathbb{L}$.
Thus any other open disk bounded by $c^*$ must also meet $\mathbb{L}$,
in particular, each of the two $\underline{\mathbb{L}}$-symmetric embedded disks on $\pi(M)$ bounded by $c^*$. Let $D$ be one of these two open disks, and pick some $\V_0 \in D\cap\mathbb{L}$.  Clearly $\V_0$ is fixed by $\underline{\mathbb{L}}$,
and we claim it is the only fixed point in $D$.  To see this,
choose a simple path $c_1$ connecting $\V_0$ to $c^*$, avoiding any other possible points in $\mathbb{L}\cap D$.
The simple path $c_1\cup\underline{\mathbb{L}}(c_1)$ divides $D$ into two disjoint open disks $D_1$ and $D_2$.
Since $\partial D_1=\underline{\mathbb{L}}(\partial D_2)$ and $\partial D_1\not=\partial D_2$, we deduce $D_2=\underline{\mathbb{L}}(D_1)$,
which means there are no fixed points of  $\underline{\mathbb{L}}$ in $D_1$ or $D_2$. Hence $\mathbb{L}\cap D=\{\V_0\}$, which yields $\sharp(\mathbb{L}\cap\hp(M))=2$.
\end{proof}

\begin{remark}\label{Riemann-Hurwitz}
Another way to see this in the spirit of our orbifold Euler computation above:  $\underline{\mathbb{L}}$ is an orientation-preserving involution of $\pi(M)$ with no $\underline{\mathbb{L}}$-fixed loops; hence the quotient map $\pi(M)\cong\stwo \to \pi(M)/\underline{\mathbb{L}}$ is a degree-2 cover branched at the fixed points of $\underline{\mathbb{L}}$, and the Riemann-Hurwitz relation implies there are exactly 2 branch points.
\end{remark}

\section{Uniqueness of W-minimizing embedded symmetric surfaces}

In this section we prove Theorem \ref{th} by showing that $M$ is M\"{o}bius congruent to a surface  containing every $\gamma_{j,l}$. Together with the uniqueness of the Plateau disk \cite{Lawson, KaW}, this shows  the theorem of Kapouleas and Wiygul \cite{KaW2} on the uniqueness of the Lawson minimal surface $\xi_{m,k}$ holds for  smaller symmetry groups.

\begin{proof} [Proof of  Theorem \ref{th}] For the case $mk=1$, Theorem \ref{th} holds thanks to the solution of the Willmore conjecture by Marques and Neves \cite{Marques0},
so we may assume $mk>1$. Since $G_{m,k}$ is a subgroup of each of $\tg$, ${G}^P_{m,k}$ and ${G}^Q_{m,k}$,
Theorem \ref{thm-inter} implies  that $\pi(M)$ either contains $\mathbb{L}$ or meets  $\mathbb{L}$ at two points.

If $\pi(M)$ meets $\mathbb{L}$ at two points, consider the curve $c^{*}\subset\pi(M)\cap(\mathbb{S}_1\bowtie\mathbb{L}^*)$ introduced in the proof of Theorem \ref{thm-inter}.

For the case of $\tg$, the additional $\underline{\gamma_{1,1}^*}$-symmetry induces $\underline{\mathbb{L}^*}$-symmetry in  $\s/\CC$.
If $\V\in \pi(M) \cap\mathbb{S}_1$ is different from $\V^{*\pm}_1$, then $\V$, $\underline{\mathbb{L}}(\V)$, $\underline{\mathbb{L}^*}(\V)$ and  $\underline{\mathbb{L}}\cdot \underline{\mathbb{L}^*}(\V)$ give four different  points in $\pi(M) \cap\mathbb{S}_1$, contrary to $v_1=2$. So  $c^{*} \cap\mathbb{S}_1=\pi(M) \cap\mathbb{S}_1=\{\V^{*\pm}_1\}$.
Since $G^*_{m,k}\subset \tg$, by Lemma \ref{Le-PQ}, $\mathbb{L}^*\subset\pi(M)$.
So after a rotation of $\s$ taking  $\{\gamma^*_{j,l}\}$ to $\{\gamma_{j,l}\}$, $\mathbb{L}\subset\pi(M)$ always holds and we can assume $M$ contains all of $\{\gamma_{j,l}\}$ without loss of generality.

For the case of ${G}^P_{m,k}$,  all of $\hp(M)$, $\mathbb{S}_1$ and $\mathbb{S}_1\bowtie\mathbb{L}^*$ admit the additional $\F_1^h$-symmetry. If $\pi(M)$ were to meet $\mathbb{L}$ at two points, then  the loop  $c^{*}$  is $\F_1^h$-symmetric because it contains both points of $\mathbb{S}_1\cap\hp(M)$. So $\hp(M)$ is divided by $c^*$ into two disks: $D$ and $\F_1^h(D)$. Note that $\mathbb{S}_1\bowtie\mathbb{L}^*$ separates $\pi(\s)$ into two $\F_1^h$-invariant domains. Hence, a small collar neighborhood of $c^{*}=\partial D$ in $D$ contained in one of these domains yields a collar neighborhood of $c^{*}=\partial \F_1^h(D)$ in $\F_1^h(D)$ contained in the same domain. This contradicts  transversality of    $\hp(M)\cap\mathbb{S}_1\bowtie\mathbb{L}^*=c^{*}$.
The case of ${G}^Q_{m,k}$ is similar, so we always have $\mathbb{L}\subset\pi(M)$.

Therefore we conclude that $M$ is M\"{o}bius congruent to a surface $\tilde{M}$ which contains every $\gamma_{j,l}$.
Note that $\tilde{M}$ can be constructed from a disk $\tilde{M}_{0,0}$ with boundary $\Gamma_{0,0}$ under action of halfturns about $\gamma_{j,l}$. By the construction \cite{Lawson} of $\xi_{m,k}$, the disk $\delta_{0,0}=\mathcal{T}_{0,0}\cap\xi_{m,k}$ is the unique (see \cite{KaW}) solution to the Plateau problem for surfaces with boundary $\Gamma_{0,0}$, and so \[\area(\tilde{M}_{0,0})\geq \area(\delta_{0,0}).\]
Summing this inequality over the orbit of $\g$ we obtain
	\[W(M)=W(\tilde{M})\geq \area(\tilde{M})\geq \area(\xi_{m,k})=W(\xi_{m,k}),\]
	and equality holds if and only if $M$ is congruent to $\xi_{m,k}$.
Thus the first inequality in Theorem \ref{th} holds. The second inequality in Theorem \ref{th} comes from \cite{KW3} (see Remark \ref{remark} below).
\end{proof}
\begin{remark}\label{remark}
To prove the second inequality in Theorem \ref{th}, we observe \cite{KW3} (see also \cite{SchoenJDG83,KaW,KaW2})
that the reflection symmetry interchanging $P_0$ and $P_1$ (or the reflection interchanging $Q_0$ and $Q_1$, respectively)
decomposes $\dd_{0,0}$ into 4 congruent {\em graphical} pieces, that is, each piece projects injectively via rotation about $Q_0Q_1$
(or about $P_0P_1$, respectively) into the great-sphere triangle $P_0Q_0Q_1$ (or into the great-sphere triangle $P_0P_1Q_0$); moreover,
these roto-projections decrease area, so a simple interpolation argument \cite{KW3} also yields an effective {\em lower} bound
on $\area(\dd_{0,0})$ and thus on $\area(\xi_{m,k})$: using these lower bounds and the earlier upper bound
$\area(\xi_{m,k})<4\pi(k+1)$, when $m\geq k >1$, we find \cite{KW3} that $\area(\xi_{m,k})>\area(\xi_{mk,1})$.
\end{remark}

Until now, we have been interested in Willmore minimizers. Recently, Kapouleas and Wiygul \cite{KaW2} showed that a closed, embedded minimal surface in $\s$ of genus $mk$ is congruent to $\xi_{m,k}$, if its symmetry group is conjugate in $O(4)$ to $\hg$, the full symmetry group of $\xi_{m,k}$ when $m>k$.  As a by-product, we show that their result holds under relaxed symmetry and genus assumptions:
\begin{theorem}\label{th-kw}
	Let $N\subset\s$ be a closed, embedded minimal surface of genus $g$ with $1<g\leq mk+k$ which is symmetric under a group $G$ conjugate in $O(4)$ to either $\widetilde{G}_{m,k}$, ${G}^P_{m,k}$ or ${G}^Q_{m,k}$.  Then $N$ has genus $g=mk$ and is congruent to $\xi_{m,k}$.
 Equivalently,  in the orbifold $\s/\CC$, the orbifold surface $\pi(\xi_{m,k})$ is the only minimal sphere invariant under any of the three groups up to the isometries of  $\s/\CC$:
 \[\begin{split}
 &    \widetilde{G}_{m,k}/\CC=\la\F_1^v\cdot\F_2^v,\F_1^h\cdot\F_2^h\ra ,\\
 &{G}^P_{m,k}/\CC=\la\F_1^v\cdot\F_2^v, \mathcal{F}^h_1 \ra,\\
 &{G}^Q_{m,k}/\CC=\la\F_1^v\cdot\F_2^v,\mathcal{F}^h_2 \ra.
 \end{split}\]
\end{theorem}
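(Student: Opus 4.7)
This theorem is essentially a corollary of Theorem~\ref{thm-inter} together with the symmetry case analysis already performed in the proof of Theorem~\ref{th}; the new content over~\cite{KaW2} is the reduction of the required ambient symmetry group from the full $\hg$ to any of the three index-$2$ subgroups $\tg$, ${G}^P_{m,k}$, ${G}^Q_{m,k}$. My plan is to mimic the proof of Theorem~\ref{th}, but to replace the closing area-inequality step (needed because the competitor was only assumed embedded) with the stronger uniqueness of the \emph{embedded minimal} disk solving the Plateau problem in $\overline{\TT_{0,0}}$. After an $O(4)$-conjugation, I may assume $G$ is literally one of the three listed groups. Each contains $G_{m,k}$, hence $\CC$, as a subgroup, so $M$ is $G_{m,k}$-symmetric and Theorem~\ref{thm-inter} applies: $\pi(M) \cong \stwo$ with $v_1 = v_2 = 2$, and either $\mathbb{L} \subset \pi(M)$ or $\pi(M)$ meets $\mathbb{L}$ transversally at exactly two points.

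The next step is to eliminate the transversal alternative in each of the three subgroup cases, using the extra symmetry beyond $G_{m,k}$. For $G = \tg$, the extra $\underline{\gamma_{1,1}^*}$-symmetry descends to the involution $\F_1^h \cdot \F_2^h = \underline{\mathbb{L}^*}$ on $\pi(M)$; together with $v_1 = 2$ this forces $\pi(M) \cap \mathbb{S}_1 = \{\V_1^{*+}, \V_1^{*-}\}$, whence Lemma~\ref{Le-PQ} gives $\mathbb{L}^* \subset \pi(M)$. The $SO(4)$-rotation $\RP^q \RQ^q$ that interchanges primal and dual Lawson data then brings us to $\mathbb{L} \subset \pi(M)$. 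For $G = {G}^P_{m,k}$ or ${G}^Q_{m,k}$, the extra reflection $\underline{\Sigma_{P_0^*}}$ or $\underline{\Sigma_{Q_0^*}}$ makes the loop $c^*$ (the unique component of $\pi(M) \cap (\mathbb{S}_1 \bowtie \mathbb{L}^*)$ meeting $\mathbb{S}_1$ twice in the transversal case) $\F_1^h$- or $\F_2^h$-symmetric, and the collar-neighborhood separation argument from the proof of Theorem~\ref{th} then contradicts transversality. In each case, after an $O(4)$-congruence the surface $\tilde{M}$ contains every Lawson halfturn circle $\gamma_{j,l}$.

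Finally, since $\tilde{M}$ is minimal and contains each $\gamma_{j,l}$, Schwarz analytic continuation forces invariance of $\tilde{M}$ under every halfturn $\underline{\gamma_{j,l}}$, and the fundamental piece $\tilde{M}_{0,0} := \tilde{M} \cap \overline{\TT_{0,0}}$ is a single embedded minimal disk with boundary exactly $\Gamma_{0,0}$. Uniqueness of the embedded minimal disk in $\overline{\TT_{0,0}}$ with boundary $\Gamma_{0,0}$ (Lawson~\cite{Lawson}, strengthened in~\cite{KaW, KaW2}) identifies $\tilde{M}_{0,0}$ with $\delta_{0,0}$, so $\tilde{M} = \xi_{m,k}$. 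The orbifold-theoretic form is then immediate by descent, since the three displayed quotient groups are precisely the deck groups acting on $\s/\CC$ in diagram~\eqref{eq-G-seq-Rmk}. The main obstacle is verifying that $\tilde{M}_{0,0}$ really is a single disk with boundary exactly $\Gamma_{0,0}$ (no extra handles, extra components, or spurious boundary arcs on the interior of $\partial \TT_{0,0}$); this bookkeeping combines embeddedness of $\tilde{M}$, the $v_1 = v_2 = 2$ conclusion, and the Riemann--Hurwitz computation in Theorem~\ref{thm-inter}, and is no more delicate than the corresponding step already carried out in~\cite{KaW2} for the full group $\hg$.
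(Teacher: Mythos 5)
Your proposal is correct and follows essentially the same route as the paper: the published proof simply cites the argument of Theorem~\ref{th} (which contains exactly your case analysis eliminating, or dualizing away, the transversal alternative for each of the three groups) to conclude that, up to isometry, $M$ contains every $\gamma_{j,l}$ with $M_{0,0}=M\cap\TT_{0,0}$ a minimal disk bounded by $\Gamma_{0,0}$, and then invokes the uniqueness of that minimal disk from \cite{Lawson,KaW} to get $M=\xi_{m,k}$. Your extra remarks (Schwarz continuation, the bookkeeping that $\tilde M_{0,0}$ is a single disk) are harmless elaborations of what the paper leaves implicit via Theorem~\ref{thm-inter} and Proposition~\ref{prop-iso-1}.
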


\begin{proof}
Recall \eqref{eq-G-seq} the index-$2$ subgroup $G_{m,k}$  of the three groups $\widetilde{G}_{m,k}$, ${G}^P_{m,k}$ and ${G}^Q_{m,k}$. Theorem \ref{thm-inter} and the proof of Theorem \ref{th} show (up to isometry of $\s$) that the minimal surface $N$ is the $G_{m,k}$-orbit of a minimal disk $N_{0,0}$ bounded by $\Gamma_{0,0}$. We claim $N_{0,0}=N\cap \TT_{0,0}$. This will imply $N_{0,0}=\delta_{0,0}$ is the unique such disk \cite{Lawson, KaW} and hence  $N=\xi_{m,k}$.

To see $N_{0,0}=N\cap \TT_{0,0}$, we need only to show that $0\leq\theta\leq\frac{\pi}{k+1}$ and  $0\leq\theta^\perp\leq\frac{\pi}{m+1}$ on $N_{0,0}\cap \s_\gamma$, 
 where we extend the parameters $\theta$ on $\gamma$ and $\theta^\perp$ on $\gamma^\perp$ from \eqref{eq:theta} to
  \[\begin{split}
      \s_\gamma&:=\s\backslash(\gamma\cup\gamma^\perp)=\left\{(\cos te^{i\theta^{\perp}},\sin te^{i\theta}) |\theta,\theta^{\perp}\in [-\pi,\pi), t\in (0,\frac{\pi}{2}) \right\}\\
      &\cong S^1\times S^1\times(0,\frac{\pi}{2}).
  \end{split}\]
  Since $N$ is embedded, for small enough $\epsilon>0$, there is a collar neighborhood $ C_{\epsilon}$ of $\partial N_{0,0}$ such that  $-\epsilon<\theta<\frac{\pi}{k+1}+\epsilon$ and  $-\epsilon<\theta^\perp<\frac{\pi}{m+1}+\epsilon$ on $ C_{\epsilon}\setminus\partial N_{0,0}$. By Theorem \ref{thm-inter}, $N$ meets $\gamma\cup\gamma^\perp$ only at $P_j$ and $Q_l$, so the complement $N_{\epsilon}=N_{0,0}\setminus C_{\epsilon}$ is a compact minimal disk in $\s_\gamma$.
 The parameters $\theta$ and $\theta^\perp$  lift to $\R$-valued angle functions $\Theta$ and $\Theta^\perp$ on the universal cover $\widetilde{\s_\gamma}\cong\mathbb{R}^3$. 
 Since the ranges of $\theta$ and $\theta^\perp$ in $\partial N_{\epsilon}$ are smaller than $\pi$, the preimage of $\partial N_{\epsilon}$ in $\widetilde{\s_\gamma}$ consists of disjoint loops. So the preimage of $N_{\epsilon}$  in $\widetilde{\s_\gamma}$ consists of disjoint compact minimal disks,  on each of which $\Theta$ and $\Theta^\perp$ are bounded. By the maximum principle, extreme values of $\Theta$ and $\Theta^\perp$ are attained at each boundary, which leads to $-\epsilon\leq\theta\leq\frac{\pi}{k+1}+\epsilon$ and $-\epsilon\leq\theta^\perp\leq\frac{\pi}{m+1}+\epsilon$ on $N_\epsilon$. This holds for arbitrarily small $\epsilon$, so $N_{0,0}\subset\TT_{0,0}$. 
\end{proof}

\begin{remark}\
\begin{enumerate}
    \item Examining Figure \ref{Tmk}, if $P_0^*$ and $ \pi^{-1}(\mathbb{A}^{*+})$ moved close to $P_0$, and if $Q_0^*$ and $ \pi^{-1}(\mathbb{A}^{*-})$ moved close to $Q_1$,
then $M$ would be made of tubes of small area but possibly large  squared mean curvature integral:  competition between these two terms in the Willmore energy $W(M)$ makes
reducing the symmetry group to $G_{m,k}$ in Theorem \ref{th-kw} and Theorem \ref{th} more subtle.

\item Concerning the uniqueness question for a  $G_{m,k}$-symmetric minimal surface $M$ of genus $mk$, by Theorem \ref{thm-inter},
either the surface  contains $\cup\gamma_{j,l}$, in which case the uniqueness holds true \cite{Lawson}, \cite{KaW2}, or the surface intersects $\gamma$, $\gamma^{\perp}$ and $\cup\gamma_{j,l}$ orthogonally (see Figure \ref{Tmk}). Hence, in the latter case, the uniqueness of $\xi^*_{m,k}$ depends on the classification of embedded free boundary minimal disks in the tile $\mathcal{T}_{0,0}$ missing a pair of opposite edges. Note that the Clifford torus provides an example missing the pair $P_0P_1$ and $Q_0Q_1$ (see Figure \ref{T1}) which is different from the case of
$\xi^*_{m,k}$. For further discussion, see Remark 1.3 of \cite{Wiygul} and Remark 4.6-4.7 of \cite{KaW2}.

\item When $m\to\infty$ for fixed $k$, the Lawson surfaces $\xi_{m,k}$ converge as varifolds to the union of $k+1$ equally-spaced great $2$-spheres meeting in the common great circle 
$\gamma^\perp$. Rescaling at a point $Q_0\in\gamma^\perp\subset\s\subset\R^4$
such that the rescaled copies of $\xi_{m,k}$ have maximum curvature $1$, they converge smoothly on compact subsets of 
$\s\setminus\{-Q_0\}$ to a \emph{Karcher tower} in $\R^3$ (identfied with the rescaled tangent space 
$T_{Q_0}\s$) asymptotic to $2k+2$ equally-spaced half-planes (sometimes called \emph{wings}) meeting in their common boundary line (the rescaling limit of the great circle $\gamma^\perp$).  These equiangular Karcher towers generalize the classical Scherk tower (the case $k=1$) whose 4 wings meet at right angles like a {\Large${\times}$} sign. The wings of the Scherk tower can be ``flapped" to meet in alternating angles like an {\small $/\!\!\!\backslash$}, and analogous wing-flapping of the Karcher towers is possible for $k\ge2$.
\item
An interesting open question (related to progress of Kapouleas \cite[Section 4]{Kap} on desingularizing intersecting minimal surfaces) is whether the hemispherical wings of the Lawson surfaces also can be flapped? Symmetry under the binary cyclic group $\CC$ allows for this, as does symmetry under other subgroups of $\hg$ in \eqref{eq-groups} containing $\CC$: the halfturn group $\g^*$ in $SO(4)$, as well as the reflection groups $\CC^P$, $\CC^Q$ and $\hCC=\check{G}_{m,k}$ in $O(4)$.
\item
In fact, Kapouleas and Wiygul  \cite{KaW2} ruled out  ``infinitesimal wing flapping" for the case $k=1$, and the moduli space theory developed in \cite{KMP} implies these surfaces are isolated. This leaves open the possibility that the hemispherical wings could ``jump" by some positive angle. One way to rule this out for  $G^*_{m,k}$ or the reflection group $\hCC$  would  also be to classify embedded free boundary minimal disks in the tile $\TT^*_{0,0}$ missing two opposite edges, as discussed in (2) for $G_{m,k}$ (since $G^*_{m,k}$ is conjugate to  $G_{m,k}$).
\item We do not study the index-$2$ subgroup 
\[
\mathrm{Diff}^+(\xi_{m,k})\cap\hg=\la G^*_{m,k}, \underline{\gamma_{0,0}}\cdot\underline{\Sigma_{P_0^*}}\ra
\]
preserving the surface orientation, since  a surface of the same genus with this symmetry group need not satisfy the conclusion of Theorem \ref{thm-inter}, nor does it allow  wing-flapping. For the same reason we do not study the remaining index-$2$ subgroups of $\hg$:
\[
\la R_{m,k},\underline{\Sigma_{Q_0}},\underline{\gamma_{0,0}^*}\cdot\underline{\Sigma_{P_0}}\ra,\quad\la R_{m,k},\underline{\Sigma_{P_0}},\underline{\gamma_{0,0}^*}\cdot\underline{\Sigma_{Q_0}}\ra.
\]
\end{enumerate}
\end{remark}

\section{$\g$-equivariant isotopy classes of embedded surfaces in $\s$}

In this section we will derive a more precise description of the isotopy class of a $\g$-symmetric embedded surface $M$ based on the topology of $\pi(M)$. An isotopy of $M$ in $\s$ is called $\g$-equivariant if it preserves the $\g$-symmetry. 
Now we state the main result of this section.
\begin{theorem} \label{th-G-inv}
	Let $M$ be a closed, embedded, $\g$-symmetric surface in $\s$ of genus $g$. If $1<g\leq (m+1)k$, then $g=mk$ and $M$ is $\g$-equivariantly isotopic to $\xi_{m,k}$.
\end{theorem}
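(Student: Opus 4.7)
The plan is to invoke Theorem~\ref{thm-inter} to pin down the topological position of $\pi(M)$ in $\pi(\s)$, and then finish with a tile-by-tile disk-isotopy argument extended $G_{m,k}$-equivariantly to all of $\s$. Since $(m{+}1)k = mk+k$, the hypothesis $1<g\le (m{+}1)k$ is identical to that of Theorem~\ref{thm-inter}, which gives $g=mk$, that $\pi(M)\subset\pi(\s)$ is an embedded $2$-sphere with $v_1=v_2=2$, and the dichotomy: either (a) $\mathbb{L}\subset\pi(M)$, or (b) $\pi(M)$ meets $\mathbb{L}$ transversely at exactly two points.

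In case (a), lifting $\mathbb{L}\subset\pi(M)$ back to $\s$ shows $M\supset\bigcup_{j,l}\gamma_{j,l}$, so each piece $M_{j,l}:=M\cap\mathcal{T}_{j,l}$ is a properly embedded open disk in the tetrahedral $3$-ball $\mathcal{T}_{j,l}$ bounded by the geodesic quadrilateral $\Gamma_{j,l}$. By the classical fact that any two properly embedded disks in a $3$-ball with the same unknotted boundary are ambient isotopic rel boundary (an Alexander--Schoenflies-type uniqueness, equivalent to contractibility of the space of such disks), there is an ambient isotopy inside $\mathcal{T}_{0,0}$, rel $\Gamma_{0,0}$, carrying $M_{0,0}$ to the Lawson Plateau disk $\delta_{0,0}$. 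Because this local isotopy fixes $\Gamma_{0,0}$ pointwise, it propagates $G_{m,k}$-equivariantly over the orbit $\{g\mathcal{T}_{0,0}:g\in G_{m,k}\}$: the translated copies agree along the common $1$-skeleton $\bigcup\Gamma_{j,l}$ and glue to a global $G_{m,k}$-equivariant ambient isotopy of $\s$ carrying $M$ to $\xi_{m,k}$.

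Case (b) is the main obstacle. Here $\underline{\mathbb{L}}$ acts on $\pi(M)\cong S^2$ as an orientation-preserving rotation by $\pi$ with two fixed points, so the orbifold quotient $\pi(M)/\underline{\mathbb{L}}=M/G_{m,k}$ is topologically a $2$-sphere with two order-$2$ cone points, in contrast to the mirror-boundary disk quotient produced in case (a). Since this orbifold topology cannot change continuously under a $G_{m,k}$-equivariant isotopy of embedded surfaces, case (b) represents an a priori distinct equivariant isotopy class, and the theorem's conclusion therefore requires case (b) to be excluded under the stated hypotheses. I expect this exclusion to follow from a more refined intersection analysis: the $G_{m,k}$-symmetry, together with embeddedness and connectedness, forces $M$ to meet each $\gamma_{j,l}$ in the $G_{m,k}$-orbit of $\gamma_{1,1}$ orthogonally at exactly two fixed points of $\underline{\gamma_{j,l}}$, and a careful combinatorial/parity count of these intersections in $\s$ should show they cannot all be arranged without $M$ actually containing some (hence, by symmetry, every) $\gamma_{j,l}$, returning us to case (a). Dispatching case (b) is the main technical difficulty I anticipate.
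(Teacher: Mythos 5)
Your reduction via Theorem~\ref{thm-inter} is correct (indeed $(m+1)k=mk+k$), and your case~(a) is essentially the paper's Proposition~\ref{prop-iso-1}(I); but even there you skip a necessary step. Knowing only that $\mathbb{L}\subset\pi(M)$, the two disks of $\pi(M)\setminus\mathbb{L}$ need not each lie in a single lens $\pi(\TT_{i,j})$: a priori $M$ meets the faces of the tiles $\TT_{j,l}$ in extra curves, so $M\cap\TT_{j,l}$ is not yet a properly embedded disk with boundary $\Gamma_{j,l}$. The paper first removes the intersections of $\pi(M)$ with the spheres $\mathcal{S}_i=\mathbb{S}_i\bowtie\mathbb{L}$ by an $\underline{\mathbb{L}}$-symmetric innermost-disk/finger-move argument, using the fact that all restricted points of $\pi(M)$ lie on $\mathbb{L}$, before one is entitled to your disk-uniqueness-in-a-ball step; one must also allow the outcome $\xi^o_{m,k}$ (disks in the odd tiles).

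The fatal problem is case~(b). You propose to exclude it by showing $M$ must contain some $\gamma_{j,l}$, but this is false: the dual Lawson surface $\xi^*_{m,k}$ of Section~2 is a closed, embedded, genus-$mk$ surface which is $\g$-symmetric --- its symmetry group is $\sigma^{-1}\hg\sigma$ for the rotation $\sigma$ carrying $\{P^*_j,Q^*_l\}$ to $\{P_j,Q_l\}$, and this group contains $\CC$ together with the halfturns $\underline{\gamma_{j,l}}$, hence contains $\g$ --- yet it meets every $\gamma_{j,l}$ orthogonally and contains none of them. This is exactly your case~(b); the authors even thank Kapouleas and Wiygul for pointing out that $\xi^*_{m,k}$ refutes precisely the kind of exclusion lemma you are hoping to prove. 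The paper instead handles case~(b) directly in Proposition~\ref{prop-iso-1}(II), standardizing $\pi(M)\cap\mathcal{S}_i$ and then $\pi(M)\cap\mathcal{S}^*_i$ until $\pi(M)$ is $\mathbb{L}^*$ together with two disks in a pair of opposite dual lenses, i.e.\ admissibly isotopic to $\pi(\xi^*_{m,k})$ or $\pi(\xi^{*o}_{m,k})$. Your supporting observation is, however, genuinely sharp: the quotient $M/\g$ does have different orbifold type in the two cases (equivalently, $\underline{\gamma_{1,1}}$ acts orientation-reversingly on $\xi_{m,k}$ but orientation-preservingly on $\xi^*_{m,k}$), so the two cases cannot be joined by an isotopy through $\g$-invariant surfaces and the final identification of $\xi^*_{m,k}$ with $\xi_{m,k}$ must pass through the ambient rotation $\sigma$. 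You have located a real subtlety in the statement, but drawn the wrong conclusion from it: the correct move is to classify case~(b), not to rule it out.
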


We prove Theorem \ref{th-G-inv} in  $\pi(\s)=\s/\CC$, where $\pi(M)$ is an $\underline{\mathbb{L}}$-symmetric genus $0$ surface.

An isotopy of $\pi(\s)$ is called \emph{admissible} if it restricts to an isotopy on each of  $\mathbb{S}_1$, $\mathbb{S}_2$ and $\mathbb{L}$,
and  also commutes with the halfturn symmetry $\underline{\mathbb{L}}$ (such an isotopy respects the intersection patterns of $\pi(M)$ with $\mathbb{S}_1$, $\mathbb{S}_2$ and $\mathbb{L}$). Hence an admissible isotopy lifts to a $\g$-equivariant isotopy of $M$ in $\s$. In our proof, the points on $\mathbb{S}_1$, $\mathbb{S}_2$ and $\mathbb{L}$ are called {\em restricted points}.

By Theorem \ref{thm-inter}, $\pi(M)$ has two intersection patterns with $\mathbb{L}$.
So Theorem \ref{th-G-inv} is a corollary of the following proposition.

\begin{proposition}\label{prop-iso-1} \
\begin{enumerate}
    \item[(I)] When $\mathbb{L}\subset\hp(M)$, $\hp(M)$ is admissibly isotopic to $\hp(\xi_{m,k})$ or $\hp(\xi^o_{m,k})$.
    \item[(II)] When $\mathbb{L}\not\subset\hp(M)$, $\hp(M)$ is admissibly isotopic to $\pi(\xi^*_{m,k})$ or $\pi(\xi^{*o}_{m,k})$.
\end{enumerate}
\end{proposition}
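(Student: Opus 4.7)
The plan is to prove Proposition \ref{prop-iso-1} in three stages: (i) use a preliminary admissible isotopy to normalize the intersection points $\pi(M) \cap \mathbb{S}_j$ and $\pi(M) \cap \mathbb{L}$; (ii) extract a $\mathbb{Z}/2$ combinatorial invariant separating the even from the odd companion; and (iii) apply an $\underline{\mathbb{L}}$-equivariant Schoenflies-type argument to produce the final admissible isotopy onto one of the four model surfaces. For the first stage, by Theorem \ref{thm-inter} $\pi(M) \cap \mathbb{S}_j$ is a $\underline{\mathbb{L}}$-invariant pair of points for each $j$: in Case (I), Lemma \ref{Le-PQ} forces $\pi(M) \cap \mathbb{S}_j = \{\V_j^+, \V_j^-\}$ already; in Case (II), the same lemma places these in the open arcs $\E_j^\pm$, one point in each arc by $\underline{\mathbb{L}}$-symmetry, and a $\underline{\mathbb{L}}$-equivariant arc isotopy shifts them to $\{\V_j^{*+}, \V_j^{*-}\}$, while similarly the two points of $\pi(M) \cap \mathbb{L}$ move to $\{\mathbb{A}^{*+}, \mathbb{A}^{*-}\}$. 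Each circle-level isotopy extends to an admissible ambient isotopy by an $\underline{\mathbb{L}}$-equivariant tubular neighborhood construction.

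For the combinatorial invariant, Alexander's theorem applied to the embedded 2-sphere $\pi(M) \subset \pi(\s) \cong \s$ produces two open 3-balls $B^\pm$ swapped by $\underline{\mathbb{L}}$. Each open arc $\E_j^\pm$ is disjoint from $\pi(M)$ in its interior, hence lies entirely in one of $B^\pm$, and $\underline{\mathbb{L}}$-symmetry forces $\E_j^-$ into the opposite ball from $\E_j^+$; after labeling so that $\E_1^+ \subset B^+$, the remaining combinatorial datum is the sign $\epsilon \in \{+, -\}$ with $\E_2^\epsilon \subset B^+$. Inspection of Figures \ref{Fig:lawson} and \ref{Tmk} then confirms that $\pi(\xi_{m,k})$ and $\pi(\xi^*_{m,k})$ realize one sign while their odd companions realize the other.

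Finally, for two surfaces $\pi(M)$, $\pi(M')$ with matched intersection and combinatorial data, I produce the admissible isotopy between them. In Case (I), $\mathbb{L}$ divides each surface into two disks swapped by $\underline{\mathbb{L}}$; a standard Schoenflies argument, applied after passing to a fundamental half for $\underline{\mathbb{L}}$, provides an isotopy rel $\mathbb{L}$ matching one half-disk to the other, and doubling by $\underline{\mathbb{L}}$ delivers the required admissible ambient isotopy. Case (II) runs analogously after choosing an $\underline{\mathbb{L}}$-invariant simple arc on $\pi(M)$ between the two $\underline{\mathbb{L}}$-fixed points in $\pi(M) \cap \mathbb{L}$, splitting $\pi(M)$ into two $\underline{\mathbb{L}}$-swapped disks (the analog of the cross-loop $c^*$ from the proof of Theorem \ref{thm-inter}).

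The main obstacle I expect is carrying out this last step carefully in the presence of the orbifold singularities along $\mathbb{S}_1 \cup \mathbb{S}_2$: classical Schoenflies is stated in the smooth category, while $\pi(M)$ meets the cone-singular circles transversally at the restricted points $\{\V_j^\pm\}$ or $\{\V_j^{*\pm}\}$. I would address this by first matching $\pi(M)$ with the model in a small conical neighborhood of each restricted point --- a purely local normalization in the tangent cone, respecting the cyclic isotropy --- then applying classical Schoenflies on the complementary smooth open region, and finally arranging $\underline{\mathbb{L}}$-equivariance by constructing the isotopy on a fundamental half and extending via $\underline{\mathbb{L}}$, which is consistent on the fixed set $\mathbb{L} = \mathrm{Fix}(\underline{\mathbb{L}})$ since $\underline{\mathbb{L}}|_{\mathbb{L}} = \mathrm{id}$.
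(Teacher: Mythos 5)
Your outline shares the paper's overall shape (normalize the intersections with $\mathbb{S}_1$, $\mathbb{S}_2$, $\mathbb{L}$; extract a discrete datum separating a surface from its odd companion; then isotope onto a model), but it omits the step that actually carries the proof: controlling the intersection of $\pi(M)$ with the $2$-spheres $\mathcal{S}_i=\mathbb{S}_i\bowtie\mathbb{L}$ and, in Case (II), $\mathcal{S}_i^*=\mathbb{S}_i\bowtie\mathbb{L}^*$. An admissible isotopy must preserve $\mathbb{S}_1$, $\mathbb{S}_2$ and $\mathbb{L}$ setwise, so it does not suffice to know that the two disks into which $\mathbb{L}$ (or the cross-loop) cuts $\pi(M)$ are abstractly isotopic rel boundary to the model disks: a ``standard Schoenflies'' ambient isotopy will in general drag $\mathbb{S}_1$ and $\mathbb{S}_2$ along with it, and a priori a disk of $\pi(M)$ can cross $\mathcal{S}_1$ back and forth, so it is not even clear the isotopy can be chosen rel these circles. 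The paper's proof consists precisely of ruling this out: an $\underline{\mathbb{L}}$-equivariant innermost-disk/finger-move argument, using that all restricted points of $\pi(M)$ lie on $\mathbb{L}$ (Case (I)) or on the distinguished loop $c$ through $\V_1^{*\pm}$ (Case (II)), shows that $\pi(M)\cap\mathcal{S}_i$ is empty (resp.\ a single loop), which confines each remaining disk of $\pi(M)$ to a single lens $\pi(\TT_{i,j})$ (resp.\ $\pi(\TT^*_{i,j})$). Once that confinement is established the final isotopy is routine; without it your last stage has no way to produce an \emph{admissible} isotopy.

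Two further problems with the middle stage. Your $\Z/2$ invariant is set up incorrectly in Case (II): there $\underline{\mathbb{L}}$ fixes the two transverse points of $\pi(M)\cap\mathbb{L}$ together with nearby points of $\mathbb{L}$ on both sides of $\pi(M)$, so it preserves each complementary ball rather than swapping them; and the open arcs $\E_j^{\pm}$ are not disjoint from $\pi(M)$ in that case (each contains one of $\V_j^{*\pm}$), so they do not lie in a single complementary component. The paper instead distinguishes a surface from its odd companion by recording which pair of opposite lenses contains the two final disks, which falls out of the confinement argument above. Finally, the ``fundamental half for $\underline{\mathbb{L}}$'' is problematic: the fixed set of $\underline{\mathbb{L}}$ in $\pi(\s)$ is the circle $\mathbb{L}$, not a separating surface, so one cannot cut $\pi(\s)$ into two halves exchanged by $\underline{\mathbb{L}}$ and double an isotopy across the fixed set; equivariance has to be arranged as in the paper, by performing every move $\underline{\mathbb{L}}$-symmetrically from the outset.
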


A key idea of the proof is to simplify   intersection patterns of $\pi(M)$ with some specific  2-spheres \begin{equation}
    \mathcal{S}_i:=\mathbb{S}_i\bowtie\mathbb{L}\hbox{ in both cases and also } \mathcal{S}_i^*:=\mathbb{S}_i\bowtie\mathbb{L}^* \hbox{ in Case (II)}
\end{equation} by admissible isotopies, so that $\pi(M)$ is composed of $\mathbb{L}$ or $\mathbb{L}^*$ and two disks in the interior of a pair of opposite lenses $\pi(\TT_{i,j})$ or  $\pi(\TT^*_{i,j})$ in $\s/\CC$, where $\TT_{i,j}^*=P^*_{i}P^*_{i+1}\bowtie Q^*_{j}Q^*_{j+1}$ (see the right-hand sides of Figures \ref{Fig:lawson} and \ref{Tmk}).

\begin{proof}
All our configurations and isotopies are $\underline{\mathbb{L}}$-symmetric in $\pi(\s)\cong\s$.
Up to  an admissible isotopy, we can assume $\pi(M)$ meets a given $\underline{\mathbb{L}}$-symmetric 2-sphere $\mathcal{S}$ transversely in disjoint simple loops. An innermost-disk argument using finger-moves simplifies  $\pi(M)\cap\mathcal{S}$: any loop bounding some open disk $D\subset\pi(M)$ such that $\overline{D}\cap (\mathbb{S}_1\cup \mathbb{S}_2\cup \mathbb{L})=\emptyset$, can be eliminated by an admissible isotopy of $\pi(M)$. So we will assume there exist no such loops or disks.

For (I), we have $\mathbb{L}$ separating $\pi(M)$ into two disks $D_1$ and $D_2$ since $\hp(M)$ has genus $0$ by Theorem \ref{thm-inter}. The $2$-sphere
$\mathcal{S}_1$ separates  $D_1$ into several connected components, at least one of which is a disk with restricted points in its closure. Since $\mathbb{L}\cap \mathbb{S}_i=\mathbb{V}_i^{\pm}$ and $v_i=2$, all the restricted points on $\pi(M)$ are contained in  $\mathbb{L}$, so ${D}_1$ itself is the only connected component, and therefore ${D}_1\cap \mathcal{S}_1=\emptyset$.  Similarly, $D_1\cap \mathcal{S}_2=\emptyset$.
So $D_1$ is completely contained in some lens $\pi(\TT_{i,j})$, and $\overline{D_1}$ is admissibly isotopic to $\pi(\xi_{m,k})\cap \pi(\TT_{i,j})$ or $\pi(\xi_{m,k}^o)\cap \pi(\TT_{i,j})$, thus $\pi(M)$ is admissibly isotopic to $\pi(\xi_{m,k})$ or $\pi(\xi_{m,k}^o)$ respectively, since $D_2=\underline{\mathbb{L}}(D_1)$.

For (II),  $\hp(M)$ meets $\mathbb{L}$ transversely at two points $C^+,C^-$.
Since $v_i=2$ by Theorem \ref{thm-inter},
we may perform an admissible isotopy of $\pi(M)$ such that $\{\mathbb{V}_1^{*\pm}, \mathbb{V}_2^{*\pm}\}\subset\pi(M)$, as in Figure \ref{Tmk}.

Consider the sphere $\mathcal{S}_1$ which intersects $\pi(M)$ in disjoint simple loops.
Since $\mathbb{S}_1$ meets $\pi(M)$ transversally at exactly two points $\V_1^{*\pm}$,
there exists a unique simple loop $c\subset \mathcal{S}_1\cap\pi(M)$ containing  $\V_1^{*\pm}$. 
Hence $\underline{\mathbb{L}}(c)=c$.
Thus \[c\cap \mathbb{L}=\pi(M)\cap \mathbb{L}=\{C^+,C^-\}\] and $c$ separates $\pi(M)$ into two disjoint open disks, $D_1$ and $D_2$, each containing one of $\V_2^{*\pm}$.
Consider the connected components of $D_1$ and $D_2$ separated by $\mathcal{S}_1$.
At least one of the components on each $D_i$ is a disk $D_{i}'$ with restricted points in its closure, so that   $\overline{D_{i}'}$   contains either $c$ or $\V_2^{*\pm}$.
If $D_{i}'$ contains $\V_2^{*+}$ or $\V_2^{*-}$--- the transverse intersection points of $\mathbb{S}_2$ with $\pi(M)$ --- then $\partial D_{i}'\subset\mathcal{S}_1$ is linked with $\mathbb{S}_2$. So  $\partial D_{i}'$ meets $\mathbb{L}$ at $C^\pm$, that is, $\partial D_i'=c$. This implies $D_i'=D_i$ and $\pi(M)\cap \mathcal{S}_1=c$. Similarly, $\mathcal{S}_2$ also separates $\pi(M)$ into two disks. Thus $\pi(M)$ is separated by $\mathcal{S}_1$ and $\mathcal{S}_2$ into four disks, one contained in each of four lenses $\pi(\TT_{0,0})$, $\pi(\TT_{1,1})$, $\pi(\TT_{0,1})$ and  $\pi(\TT_{1,0})$ tiling $\pi(\s)$. In each lens, the restricted points are all located on the boundary. So on the disk in each lens we can find a simple path avoiding restricted points and connecting one of $\V_1^{*\pm}$ to one of $\V_2^{*\pm}$ such that these four paths form an $\underline{\mathbb{L}}$-symmetric simple loop on $\pi(M)$. An admissible isotopy respecting the four lenses takes this loop to $\mathbb{L}^*$.

Now  consider the connected components of $\pi(M)$ separated by $\mathcal{S}_i^*$.
As we argued above to show $\pi(M)\cap \mathcal{S}_1=c$, similarly we get $\pi(M)\cap\mathcal{S}_i^*=\mathbb{L}^*$ using $C^+$, $C^-$ and the circle  $\mathbb{L}^*$ in place of $\V_2^{*+}$, $\V_2^{*-}$ and the loop $c$. So $\mathbb{L}^*$ separates $\pi(M)$ into two disjoint disks, neither meeting
$\mathcal{S}_i^*$.
Because $\pi(M)$ meets $\mathbb{S}_i$ transversely at $\V_i^{*\pm}$, these two disks must lie in a pair of opposite lenses: either $\pi(\TT^*_{0,0})$ and $\pi(\TT^*_{1,1})$, or $\pi(\TT^*_{0,1})$ and  $\pi(\TT^*_{1,0})$; in the first case,
$\pi(M)$ is admissibly isotopic to $\pi(\xi^*_{m,k})$, and in the second, to $\pi(\xi^{*o}_{m,k})$.
\end{proof}

\vspace{2mm}
\begin{remark}\label{rmk-t2}
\
\begin{enumerate}
    \item
 The condition $g>1$ is necessary; in fact, there are two types of $G_{m,k}$-symmetric tori which are obviously not even homeomorphic to $\xi_{m,k}$ when $mk>1$:
	\begin{enumerate}
		\item The homogeneous torus $\{(ae^{i\phi},be^{i\psi}): a^2+b^2=1, \phi ,\psi\in [0,2{\pi}]\}$ is  $G_{m,k}$-symmetric and projects (Figure \ref{T1}) to a torus in $\pi(\s)$.
		\begin{figure}[htbp]
			\includegraphics[width=.8\textwidth]{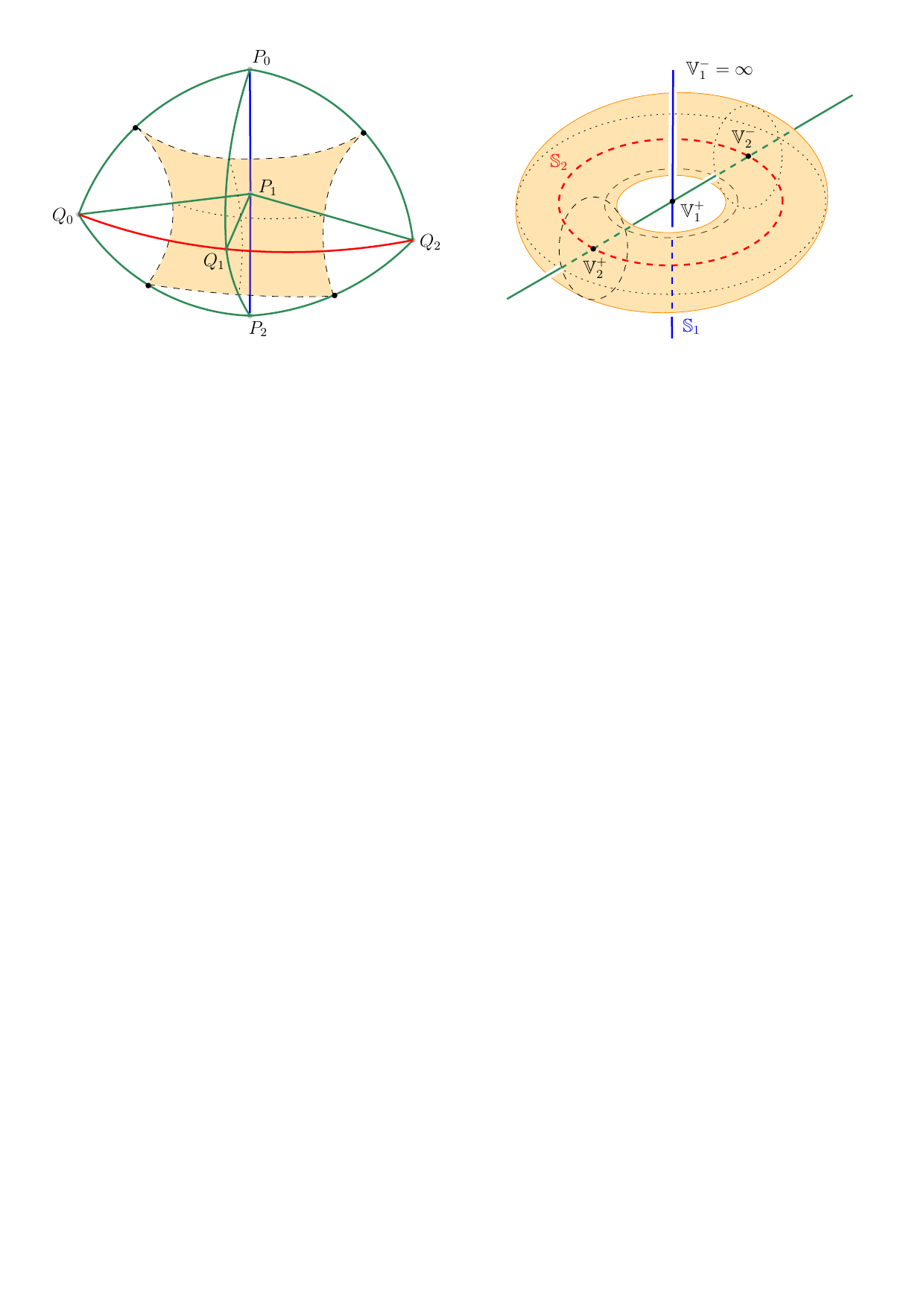}
			\caption{$G_{m,k}$-symmetric homogeneous torus in $\DD$ and its projection to $\s/\CC$}\label{T1}
		\end{figure}
		\item
		If $\hbox{gcd}(m+1,k+1)=1$, a knotted $G_{m,k}$-symmetric torus  exists  (see Figure \ref{T2}), in which case $\CC$ admits an element of order $(m+1)(k+1)$.
				\begin{figure}[htbp]
			\includegraphics[width=.75\textwidth]{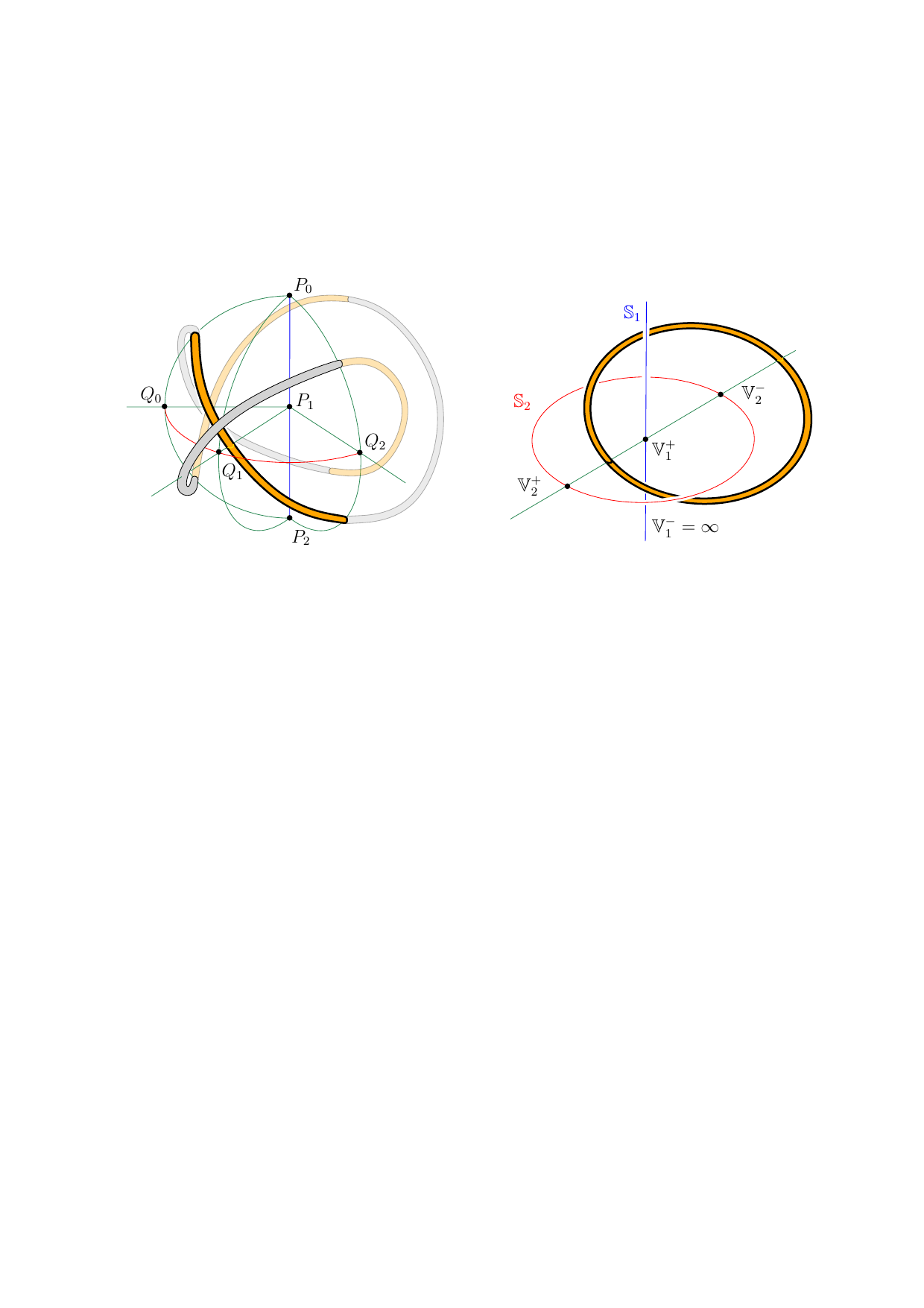}
			\caption{$G_{2,1}$-symmetric  torus in $\DD$ and its projection to  $\s/R_{2,1}$}\label{T2}
		\end{figure}	
	\end{enumerate}
 \vspace{2mm}
\item The upper bound on the genus in Theorem \ref{th-G-inv} is sharp since there exists $G_{m,k}$-symmetric surfaces (see Figure \ref{Fig:T3}) with genus $mk+k+1$.
\begin{figure}[htbp]
	\includegraphics[width=0.7
 \textwidth]{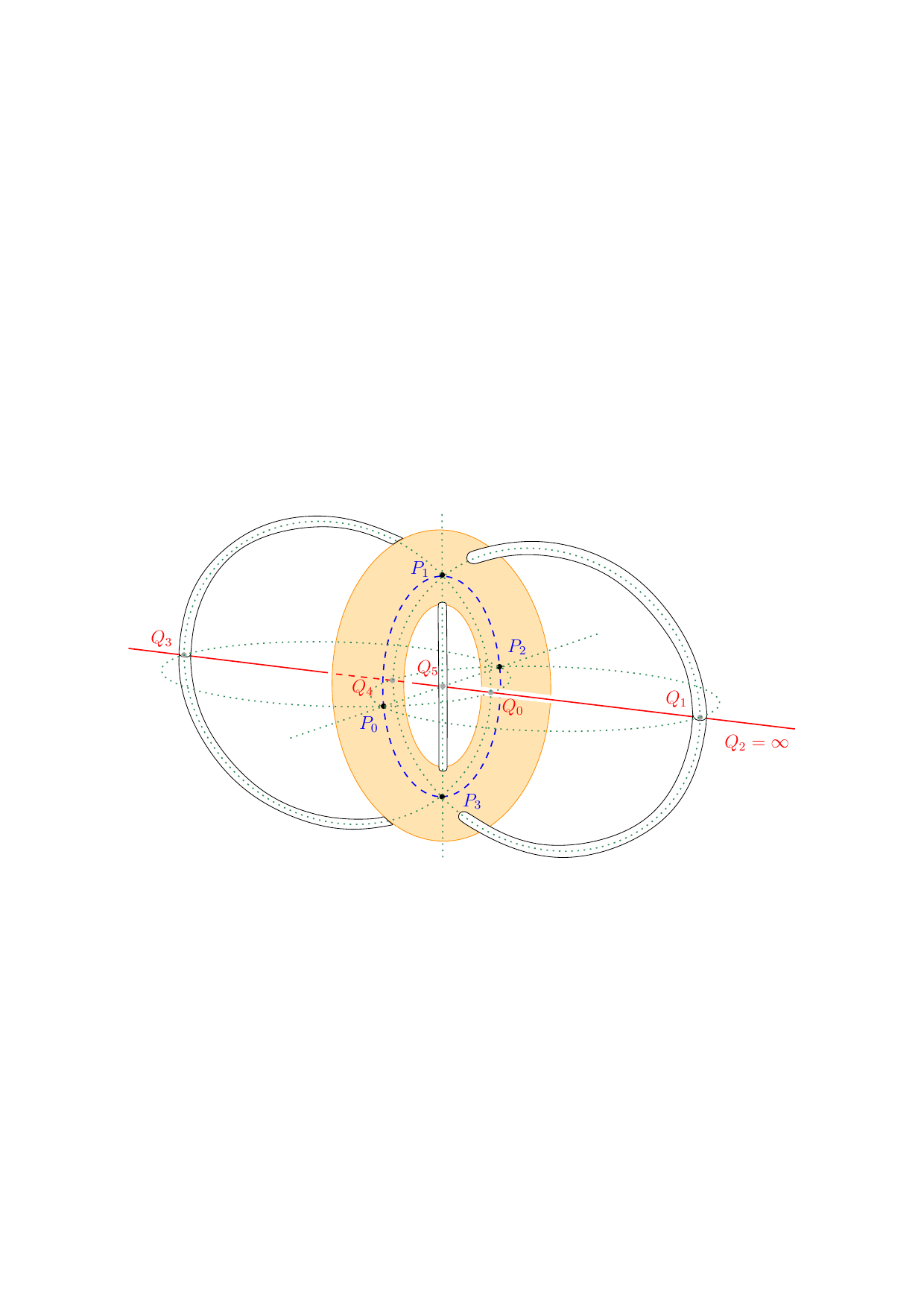}
	\caption{The surface  constructed from the homogeneous torus by adding handles surrounding part of $\gamma_{i,j}$ ($i,j$ are odd). Here $m=2,k=1,g=4$.}\label{Fig:T3}
\end{figure}
\item
  The group $\g$ in Theorem \ref{th-G-inv} is also optimal in the sense that there exist infinitely many knotted genus-$mk$ surfaces with $\CC$-symmetry (see Figure \ref{FD2}) which are homeomorphic but not isotopic to any embedded minimal surface \cite{Lawson2}.
    \begin{figure}[htbp]
			\includegraphics[width=.8\textwidth]{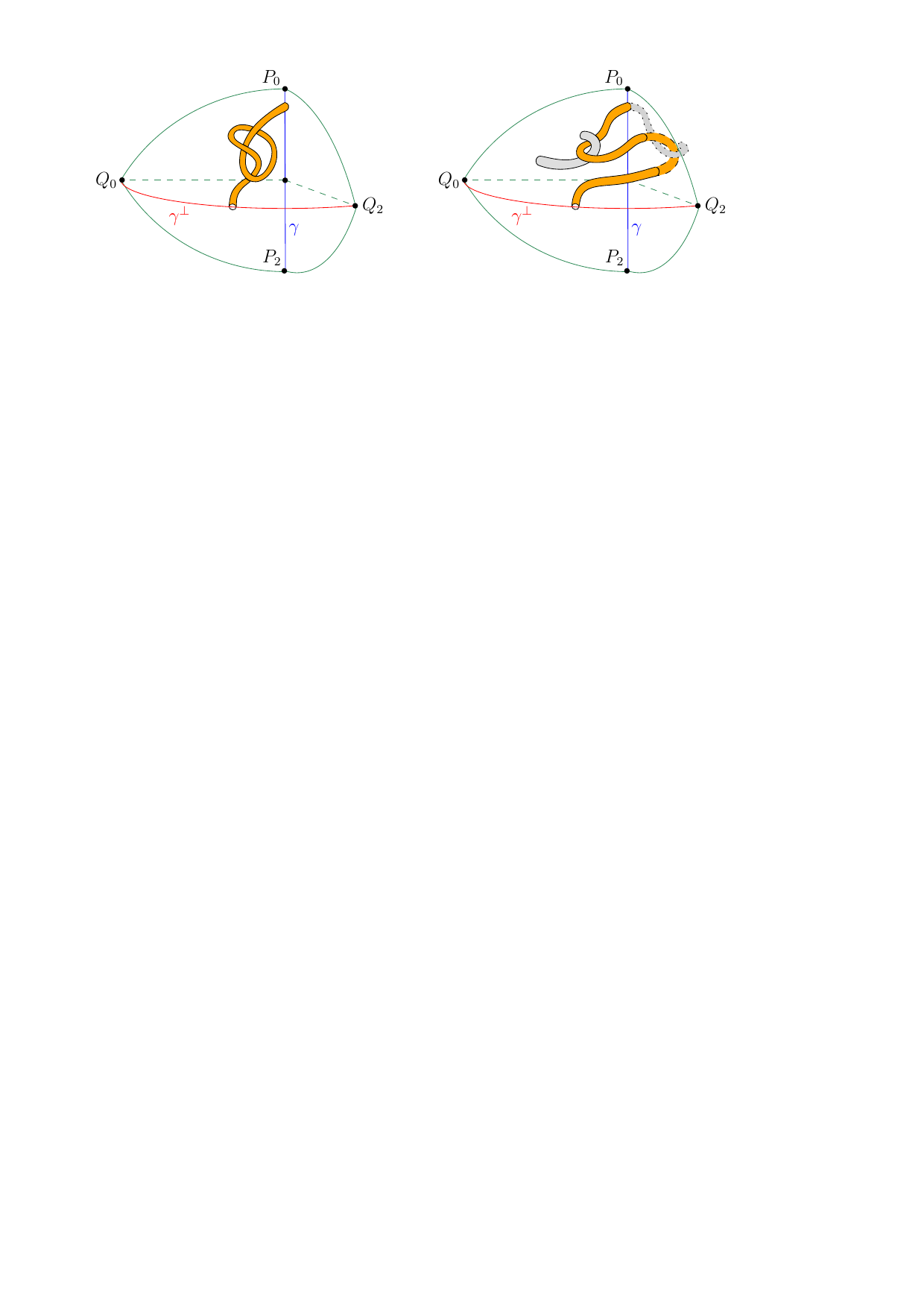}
 			\caption{These two types of surfaces and their combinations can create many surfaces homeomorphic but not isotopic to $\xi_{m,k}$.}\label{FD2}
		\end{figure}
\end{enumerate}
\end{remark}
\vspace{2mm}{\bf Acknowledgements}:  We are grateful to E.~Kuwert, X.~Ma, P.~McGrath, R.~Schwartz, C.~Song, C.~Xia, Z.X.~Xie, and particularly to C.P.~Wang, for several valuable discussions during this project; special appreciation is due to K.~Brakke for producing Figure \ref{Fig:SdC}.  An earlier proof of Theorem  \ref{th} for surfaces with the smaller symmetry group $G=G_{m,k}$ had overlooked part of the singular set for the orbifold $\s/G$: we thank N.~Kapouleas and D.~Wiygul for pointing out $\xi^*_{m,k}$ would have been a counterexample to one of our earlier lemmas, and also for encouraging us to address the ``wing flapping" question, as we discuss briefly in Section 4. RK was supported in part by KIMS and Coronavirus University during the pandemic, as well as by National Science Foundation grant DMS-1928930 while at the Simons Laufer Mathematical Sciences Institute (formerly MSRI) in Berkeley, California, where this work was completed during the Fall 2024 term; YL was supported by NSFC Project 12171473; PW was supported by NSFC Project 12371052 and 11971107.

\def\refname{References}

\end{document}